\theoremstyle{plain}
\newtheorem{theorem}{Theorem}[section]
\newtheorem{lemma}[theorem]{Lemma}
\theoremstyle{definition}
\newtheorem{definition}[theorem]{Definition}
\newtheorem{corollary}[theorem]{Corollary}
\newtheorem{example}{\sc Example}
\theoremstyle{remark}
\begin{document}
	
\title{\bf {Pronormal $L$-subgroups of an $L$-group}}
\author{\textbf{Iffat Jahan$^1$ and Ananya Manas $^2$} \\\\ 
	$^{1}$ Department of Mathematics, Ramjas College\\
	University of Delhi, Delhi, India \\
	ij.umar@yahoo.com \\\\
	$^{2}$Department of Mathematics, \\
	University of Delhi, Delhi, India \\
	ananyamanas@gmail.com 
		\\    }
	\date{}
	\maketitle

\begin{abstract}
	 \noindent In this paper, the notion of pronormal $L$-subgroups of an $L$-group has been introduced by using the concept of conjugate $L$-subgroup developed in \cite{jahan_conj}. The notion of pronormal $L$-subgroups has been investigated in context of normality and subnormality of $L$-subgroups and several related properties have been established. Moreover, the relation of pronormality with normalizers and maximal $L$-subgroups has been explored. \\ \\
	 {\bf Keywords:} $L$-algebra; $L$-subgroup; Generated $L$-subgroup; Pronormal $L$-subgroup; Normal $L$-subgroup; Conjugate of an $L$-subgroup; Maximal $L$-subgroup.
\end{abstract}
		
\section{Introduction}
	\noindent  The notion of a fuzzy set  was founded by Zadeh \cite{zadeh_fuzzy} in 1965 and Rosenfeld \cite{rosenfeld_fuzzy} applied this notion to group theory in the year 1971 which led to the evolution of fuzzy group theory. In 1981, Liu \cite{liu_op} introduced the notion of lattice valued fuzzy subgroups ($L$-subgroups). This pioneered the studies of $L$(lattice valued fuzzy)-algebraic 
		substructures. Recently, in a series of papers \cite{ajmal_char,ajmal_nc,ajmal_nil,ajmal_nor,ajmal_sol,jahan_max} various concepts of classical group theory such as characteristic subgroups, normalizer of a subgroup, nilpotent subgroups, solvable subgroups, normal closure of a subgroup, maximal subgroup etc. have been studied within the framework of $L$-setting and are shown to be compatible. Thus a coherent and systematic theory is coming into existence with this development.

		In classical group theory, the notion of pronormal subgroups is closely related to the notions of
		normal and subnormal subgroups. In fact, pronormality together with subnormality is equivalent to normality. The pronormal fuzzy subgroups were introduced by Abou Zaid [9] using the concept of level subsets. However, his definition and the corresponding study
		of pronormal fuzzy subgroups fail to provide any information about the deeper structure of
		fuzzy pronormal subgroups. Hence a fresh approach to pronormal $L$(fuzzy)-subgroups is needed.
		In \cite{jahan_conj}, the authors have introduced the notion of conjugate of an $L$-subgroup of an $L$-group by an $L$-point. This notion has been utilized to develop pronormal $L$-subgroups of an $L$-group in this paper.

		We begin our work in section 3 by defining the pronormal $L$-subgroups of an $L$-group using the notion of conjugate $L$-subgroups developed in \cite{jahan_conj}. An example has been provided to show the existence of pronormal $L$-subgroups. Then, a connection with the classical property of pronormality of level subsets with that of pronormal $L$-subgroups has been established.  Then, it has been shown that the image of a pronormal $L$-subgroup under group homomorphism is a pronormal $L$-subgroup, provided that the parent group $\mu$ possesses the sup-property.
		
		In section 4, we have explored the important relationships between the notions of normality, subnormality, normalizer and maximality with the concept of pronormality defined in $L$-group theory. Firstly, it has been shown that every normal $L$-subgroup of an $L$-group $\mu$ is a pronormal $L$-subgroup of $\mu$. Then, the pronormality of the normalizer of a pronormal $L$-subgroup has been discussed. It has been exhibited that a maximal $L$-subgroup of an $L$-group is a pronormal $L$-subgroup. The notion of subnormal $L$-subgroup of an $L$-group was introduced in \cite{ajmal_nc} and studied in detail in \cite{ajmal_subnormal}. This notion is 
		used to establish that an $L$-subgroup of an $L$-group $\mu$ that is both a subnormal and a pronormal $L$-subgroup of $\mu$ is a normal $L$-subgroup of $\mu$. This result is then applied to prove that every pronormal $L$-subgroup of a nilpotent $L$-group $\mu$ having the same tip and tail as $\mu$ is normal in $\mu$.
		    
\section{Preliminaries}

Throughout this paper, $L = \langle L, \leq, \vee, \wedge  \rangle$ denotes a complete and completely distributive lattice where '$\leq$' denotes the partial ordering on $L$ and '$\vee$'and '$\wedge$' denote, respectively, the join (supremum) and meet (infimum) of the elements of $L$. Moreover, the maximal and minimal elements of $L$ will be denoted by $1$ and $0$, respectively. The concept of completely distributive lattice can be found in any standard text on the subject \cite{gratzer_lattices}. 

The notion of a fuzzy subset of a set was introduced by Zadeh \cite{zadeh_fuzzy} in 1965. In 1967, Goguen \cite{goguen_sets} extended this concept to $L$-fuzzy sets. In this section, we recall the basic definitions and results associated with $L$-subsets that shall be used throughout this work. These definitions can be found in chapter 1 of \cite{mordeson_comm}.

Let $X$ be a non-empty set. An $L$-subset of $X$ is a function from $X$  into $L$. The set of  $L$-subsets of $X$ is called  the $L$-power set of $X$ and is denoted by $L^X$.  For  $\mu \in L^X, $  the set $ \lbrace\mu(x) \mid x \in X \rbrace$  is called the image of $\mu$  and is denoted by  Im $\mu $. The tip and tail of $ \mu $  are defined as $\bigvee \limits_{x \in X}\mu(x)$ and $\bigwedge \limits_{x \in X}\mu(x)$, respectively. An $L$-subset $\mu$ of $X$ is said to be contained in an $L$-subset $\eta$  of $X$ if  $\mu(x)\leq \eta (x)$ for all $x \in X$. This is denoted by $\mu \subseteq \eta $.  For a family $\lbrace\mu_{i} \mid i \in I \rbrace$  of $L$-subsets in  $X$, where $I$  is a non-empty index set, the union $\bigcup\limits_{i \in I} \mu_{i} $    and the intersection  $\bigcap\limits_{i \in I} \mu_{i} $ of  $\lbrace\mu_{i} \mid i \in I \rbrace$ are, respectively, defined by
\begin{center}
	$\bigcup\limits_{i \in I} \mu_{i}(x)= \bigvee\limits_{i \in I} \mu(x) $ \quad and \quad $\bigcap\limits_{i \in I} \mu_{i} (x)= \bigwedge\limits_{i \in I} \mu(x) $
\end{center}
for each  $x \in X $. If  $\mu \in L^X $  and  $a \in L $,  then the level  subset $\mu_{a}$ of $\mu$  is defined as
\begin{center}
	$\mu_{a}= \lbrace x \in X \mid \mu (x) \geq a\rbrace$. 
\end{center}
\noindent For $\mu, \nu \in L^{X} $, it can be verified easily that if $\mu\subseteq \nu$, then $\mu_{a} \subseteq \nu_{a} $ for each $a\in L $.

For $a\in L$ and $x \in X$, we define $a_{x} \in L^{X} $ as follows: for all $y \in X$,
\[
a_{x} ( y ) =
\begin{cases}
	a &\text{if} \ y = x,\\
	0 &\text{if} \ y\ne x.
\end{cases}
\]
$a_{x} $ is referred to as an $L$-point or $L$-singleton. We say that $a_{x} $ is an $L$-point of $\mu$ if and only if
$\mu( x )\ge a$ and we write $a_{x} \in \mu$. 

Let $S$ be a groupoid. The set product $\mu \circ \eta$   of $\mu, \eta \in L^S$ is an $L$-subset of $S$ defined by
\begin{center}
	$\mu \circ \eta (x) = \bigvee \limits_{x=yz}\lbrace\mu (y) \wedge \eta (z) \rbrace.$
\end{center}

\noindent Note that if $x$ cannot be factored as  $x=yz$  in $S$, then  $\mu \circ \eta (x)$, being  the least upper bound of the empty set, is zero. It can be verified that the set product is associative in  $L^S$  if $S$ is a semigroup.

Let $f$ be a mapping from a set $X$ to a set $Y$. If $\mu \in L ^{X}$ and $\nu \in L^{Y}$, then the image $f(\mu )$
of $\mu $ under $f$ and the preimage $f^{-1} (\nu )$ of $\nu $ under $f$ are $L$-subsets of $Y$ and $X$ respectively, defined by
\[ f(\mu )(y)=\bigvee\limits_{x\in f^{-1} (y)} \{\mu (x)\} \]
and
\[ f^{-1} (\nu )(x)=\nu (f(x)). \]
Again,  if $f^{-1} (y)=\emptyset $,
then $f(\mu )(y)$ being the least upper bound of the empty set, is zero.

Throughout this paper, $G$ denotes an ordinary group with the identity element `$e$' and $I$ denotes a non-empty indexing set. Also, $1_A$ denotes the characteristic function of a non-empty set $A$.

In 1971, Rosenfeld \cite{rosenfeld_fuzzy} applied the notion of fuzzy sets to groups to introduce the fuzzy subgroup of a group. Liu \cite{liu_op}, in 1981, extended the notion of fuzzy subgroups to $L$-fuzzy subgroups ($L$-subgroups), which we define below.   

\begin{definition}(\cite{rosenfeld_fuzzy})
	Let $\mu \in L ^G $. Then, $\mu $ is called an $L$-subgroup of $G$ if for each $x, y\in G$,
	\begin{enumerate}
		\item[({i})] $\mu (xy)\ge \mu (x)\wedge \mu (y)$,
		\item[({ii})] $\mu (x^{-1} )=\mu (x)$.
	\end{enumerate}
	The set of $L$-subgroups of $G$ is denoted by $L(G)$. Clearly, the tip of an $L$-subgroup
	is attained at the identity element of $G$.
\end{definition}

\begin{theorem}(\cite{mordeson_comm}, Lemma 1.2.5)
	\label{lev_gp}
	Let $\mu \in L ^G $. Then, $\mu $ is an $L$-subgroup of $G$ if and only if each non-empty level subset $\mu_{a} $ is a subgroup of $G$.
\end{theorem}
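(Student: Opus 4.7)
The plan is to prove both implications directly from the $L$-subgroup axioms, using the standard one-step subgroup criterion that a non-empty subset $H$ of $G$ is a subgroup if and only if $xy^{-1} \in H$ whenever $x, y \in H$. Since each of the axioms (i) and (ii) translates cleanly into a statement about level sets via the meet operation, the argument will be essentially mechanical once the right choices of the parameter $a$ are made.

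For the forward direction, I would assume $\mu \in L(G)$ and fix $a \in L$ with $\mu_a \neq \emptyset$. Given any $x, y \in \mu_a$, axiom (ii) gives $\mu(y^{-1}) = \mu(y) \geq a$, and axiom (i) then yields $\mu(xy^{-1}) \geq \mu(x) \wedge \mu(y^{-1}) \geq a \wedge a = a$. Hence $xy^{-1} \in \mu_a$, so $\mu_a$ is a subgroup of $G$.

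For the converse, assume every non-empty level set $\mu_a$ is a subgroup of $G$. To verify axiom (i), I would take arbitrary $x, y \in G$ and set $a := \mu(x) \wedge \mu(y)$; then both $x$ and $y$ lie in $\mu_a$, so $\mu_a$ is a non-empty subgroup, giving $xy \in \mu_a$ and hence $\mu(xy) \geq \mu(x) \wedge \mu(y)$. For axiom (ii), set $a := \mu(x)$ so that $x \in \mu_a$; since $\mu_a$ is a subgroup, $x^{-1} \in \mu_a$, which yields $\mu(x^{-1}) \geq \mu(x)$. Applying the same reasoning to $x^{-1}$ and using $(x^{-1})^{-1} = x$ gives the reverse inequality $\mu(x) \geq \mu(x^{-1})$, completing the argument. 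I do not anticipate any genuine obstacle here: the only care needed is the choice $a = \mu(x) \wedge \mu(y)$ in the converse direction, which is the tightest level containing both $x$ and $y$ and therefore delivers the sharpest inequality.
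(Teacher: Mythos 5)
Your proof is correct and is exactly the standard argument; the paper itself states this result without proof (citing Mordeson and Malik, Lemma 1.2.5), and your two directions --- the one-step subgroup criterion for the forward implication and the choices $a=\mu(x)\wedge\mu(y)$ and $a=\mu(x)$ for the converse --- are precisely how that reference proves it. No gaps.
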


It is well known in literature that the intersection of an arbitrary family of $L$-subgroups of a group is an $L$-subgroup of the given group.

\begin{definition}(\cite{rosenfeld_fuzzy})
	Let $\mu \in L ^G $. Then, the $L$-subgroup of $G$ generated by $\mu $ is defined as the smallest $L$-subgroup of $G$
	which contains $\mu $. It is denoted by $\langle \mu \rangle $, that is,
	\[
	\langle \mu \rangle =\cap\{\mu _{{i}} \in L(G) \mid \mu \subseteq \mu _{i}\}.
	\]
\end{definition}

Let $\eta, \mu\in L^{{G}}$ such that $\eta\subseteq\mu$. Then, $\eta$ is said to be an $L$-subset of $\mu$. The set of all $L$-subsets of $\mu$ is denoted by $L^{\mu}.$
Moreover, if $\eta,\mu\in L(G)$ such that  $\eta\subseteq \mu$, then $\eta$ is said to be an $L$-subgroup of $\mu$. The set of all $L$-subgroups of $\mu$ is denoted by $L(\mu)$.

From now onwards, $\mu$ denotes an $L$-subgroup of $G$ which shall be considered as the parent $L$-group. In fact, $\mu$ is an $L$-subgroup of $G$ if and only if $\mu$ is an $L$-subgroup of $1_G$.

\begin{definition}(\cite{ajmal_sol}) 
	Let $\eta\in L(\mu)$ such that $\eta$ is non-constant and $\eta\ne\mu$. Then, $\eta$ is said to be a proper $L$-subgroup of $\mu$.
\end{definition}

\noindent Clearly, $\eta$ is a proper $L$-subgroup of $\mu$ if and only if $\eta$ has distinct tip and tail and $\eta\ne\mu$.

\begin{definition}(\cite{ajmal_nil})
	Let $\eta \in L(\mu)$. Let $a_0$ and $t_0$ denote the tip and tail of $\eta$, respectively. We define the trivial $L$-subgroup of $\eta$ as follows:
	\[ \eta_{t_0}^{a_0}(x) = \begin{cases}
		a_0 & \text{if } x=e,\\
		t_0 & \text{if } x \neq e.
	\end{cases} \]
\end{definition}

\begin{theorem}(\cite{ajmal_nil}, Theorem 2.1)
	\label{lev_sgp}
	Let $\eta \in L^\mu$. Then,  $\eta\in L(\mu)$ if and only if each non-empty level subset $\eta_a$  is a subgroup of $\mu_a$.
	
\end{theorem}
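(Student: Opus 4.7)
The plan is to prove the two implications by applying Theorem \ref{lev_gp} (the level-set characterization of $L$-subgroups of $G$) together with the observation that $\eta \subseteq \mu$ is equivalent to $\eta_a \subseteq \mu_a$ for every $a \in L$, which was already noted in the preliminaries. The whole argument should be short since Theorem \ref{lev_gp} does most of the work; the task is just to correctly juggle the two containment requirements packaged into the phrase ``$\eta \in L(\mu)$'', namely (a) $\eta$ is an $L$-subgroup of $G$ and (b) $\eta \subseteq \mu$.

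For the forward direction, I would assume $\eta \in L(\mu)$, so $\eta \in L(G)$ and $\eta \subseteq \mu$. By Theorem \ref{lev_gp}, every non-empty $\eta_a$ is a subgroup of $G$, and every non-empty $\mu_a$ is a subgroup of $G$. Fix $a \in L$ with $\eta_a \neq \emptyset$. From $\eta \subseteq \mu$ we get $\eta_a \subseteq \mu_a$, so in particular $\mu_a$ is non-empty and is a subgroup of $G$. Since $\eta_a$ is itself a subgroup of $G$ contained in the subgroup $\mu_a$, it is a subgroup of $\mu_a$.

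For the converse, assume every non-empty $\eta_a$ is a subgroup of $\mu_a$. Then in particular every non-empty $\eta_a$ is a subgroup of $G$, whence $\eta \in L(G)$ by Theorem \ref{lev_gp}. It remains to show $\eta \subseteq \mu$. For this, pick any $x \in G$ and let $a = \eta(x)$. If $a = 0$, the inequality $\eta(x) \leq \mu(x)$ is automatic; otherwise $x \in \eta_a$, so $\eta_a$ is non-empty and by hypothesis sits inside $\mu_a$, giving $x \in \mu_a$, i.e.\ $\mu(x) \geq a = \eta(x)$.

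There is no real obstacle here: the statement is essentially a relativisation of Theorem \ref{lev_gp} from the ambient characteristic function $1_G$ to an arbitrary $L$-subgroup $\mu$, and the only subtle point is to remember, in the backward direction, to recover $\eta \subseteq \mu$ from the level-wise containment by evaluating at $a = \eta(x)$ rather than arguing on levels abstractly.
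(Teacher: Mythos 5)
Your proof is correct; the paper states this result without proof (it is recalled from Theorem 2.1 of \cite{ajmal_nil}), and your reduction to Theorem \ref{lev_gp} plus the levelwise-containment observation is exactly the standard argument one would expect there. The only superfluous step is re-deriving $\eta \subseteq \mu$ in the converse direction: the hypothesis $\eta \in L^{\mu}$ already packages that containment by definition, so only membership in $L(G)$ needs to be checked.
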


The normal fuzzy subgroup of a fuzzy group was introduced by Wu \cite{wu_normal} in 1981. We note that for the development of this concept, Wu \cite{wu_normal} preferred the $L$-setting. Below, we recall the notion of a normal $L$-subgroup of an $L$-group:

\begin{definition}(\cite{wu_normal})
	Let $\eta \in L(\mu)$. Then, we say that  $\eta$  is a normal $L$-subgroup of $\mu$   if  
	\begin{center}
		$\eta(yxy^{-1}) \geq \eta(x)\wedge \mu(y)$ for  all  $x,y \in G.$
	\end{center}
\end{definition}

\noindent The set of normal $L$-subgroups of $\mu$  is denoted by $NL(\mu)$. If $\eta \in NL(\mu)$, then we write\vspace{.2cm} $ \eta \triangleleft \mu$. 

Here, we mention that the arbitrary intersection of a family of normal $L$-subgroups of an $L$-group $\mu$ is again a normal $L$-subgroup of $\mu$. 

\begin{theorem}(\cite{ajmal_char})
	\label{lev_norsgp}
	Let $\eta \in L(\mu)$. Then, $\eta\in NL(\mu)$ if and only if each non-empty level subset $\eta_a$  is a normal subgroup of $\mu_a$.
\end{theorem}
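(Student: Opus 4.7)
The plan is to proceed by a direct verification in both directions, using the standard translation between pointwise inequalities on $\eta$ and membership in level sets, just as in the proof of the analogous characterization of $L$-subgroups (Theorem \ref{lev_sgp}). Since $\eta \in L(\mu)$ is already assumed, Theorem \ref{lev_sgp} tells us that each non-empty $\eta_a$ is automatically a subgroup of $\mu_a$; so on both sides of the equivalence we only need to worry about the normality clause.

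For the forward direction, assume $\eta \triangleleft \mu$ and fix any $a \in L$ with $\eta_a \neq \emptyset$. To see that $\eta_a$ is normal in $\mu_a$, I would take arbitrary $x \in \eta_a$ and $y \in \mu_a$, so that $\eta(x) \geq a$ and $\mu(y) \geq a$, and then apply the defining inequality of normality to conclude
\[
\eta(yxy^{-1}) \geq \eta(x) \wedge \mu(y) \geq a,
\]
which gives $yxy^{-1} \in \eta_a$.

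For the converse, assume each non-empty $\eta_a$ is normal in $\mu_a$, and fix arbitrary $x, y \in G$. The natural move is to set $a = \eta(x) \wedge \mu(y)$; then $x \in \eta_a$ (so $\eta_a$ is non-empty) and $y \in \mu_a$. By hypothesis $\eta_a \trianglelefteq \mu_a$, hence $yxy^{-1} \in \eta_a$, i.e.\ $\eta(yxy^{-1}) \geq a = \eta(x) \wedge \mu(y)$. Since $x, y$ were arbitrary, $\eta \triangleleft \mu$.

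This argument is essentially routine; I do not anticipate any real obstacle. The only minor subtlety to keep in mind is that the equivalence relies on $\eta \subseteq \mu$ (which is part of $\eta \in L(\mu)$) so that $\eta_a \subseteq \mu_a$ for every $a$, making the statement ``$\eta_a$ is a (normal) subgroup of $\mu_a$'' well-posed. No appeal to completeness or complete distributivity of $L$ is needed, and the case $a = 0$ is automatically handled because $\eta_0 = \mu_0 = G$.
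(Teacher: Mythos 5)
Your argument is correct: the paper states this result as a recalled theorem from the cited reference without reproducing a proof, and your two-direction verification (reducing the subgroup clause to Theorem \ref{lev_sgp} and handling normality by the choice $a = \eta(x) \wedge \mu(y)$ in the converse) is exactly the standard argument that proof takes. No gaps.
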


\begin{definition}(\cite{rosenfeld_fuzzy})
	Let $\mu \in L^X$. Then, $\mu$ is said to possess sup-propery if for each $A \subseteq X$, there exists $a_0 \in A$ such that $ \mathop \vee \limits_{a \in A}  {\mu(a) } = \mu(a_0)$. 
\end{definition}

\noindent Lastly, recall the following form \cite{ajmal_gen}:

\begin{theorem}(\cite{ajmal_gen}, Theorem 3.1)
	\label{gen_form}
	Let $\eta\in L^{^{\mu}}.$ Let $a_{0}=\mathop {\vee}\limits_{x\in G}{\left\{\eta\left(x\right)\right\}}$ and define an $L$-subset $\hat{\eta}$ of $G$ by
	\begin{center}
		$\hat{\eta}\left(x\right)=\mathop{\vee}\limits_{a \leq a_{0}}{\left\{a \mid x\in\left\langle \eta_{a}\right\rangle\right\}}$.
	\end{center}
	
	\noindent Then, $\hat{\eta}\in L(\mu)$ and  $\hat{\eta} =\left\langle \eta \right\rangle$.
\end{theorem}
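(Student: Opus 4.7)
The plan is to verify four claims in sequence: (i) $\hat{\eta}\subseteq\mu$, (ii) $\hat{\eta}\in L(G)$, (iii) $\eta\subseteq\hat{\eta}$, and (iv) $\hat{\eta}$ is contained in every $L$-subgroup of $G$ that contains $\eta$. Before starting I would record the monotonicity observation that for $a\le a'\le a_{0}$ one has $\eta_{a'}\subseteq\eta_{a}$, and hence $\langle\eta_{a'}\rangle\subseteq\langle\eta_{a}\rangle$. I would also flag the one structural fact about $L$ I intend to use: because $L$ is completely distributive, meet distributes over arbitrary joins, i.e., $(\bigvee_{i}a_{i})\wedge(\bigvee_{j}b_{j})=\bigvee_{i,j}(a_{i}\wedge b_{j})$.

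Claims (i) and (iii) are short level-cut chases. For (i), fix $x\in G$ and any $a\le a_{0}$ with $x\in\langle\eta_{a}\rangle$; from $\eta\subseteq\mu$ we get $\eta_{a}\subseteq\mu_{a}$, and by Theorem~\ref{lev_gp} $\mu_{a}$ is a subgroup of $G$, so $\langle\eta_{a}\rangle\subseteq\mu_{a}$ and hence $\mu(x)\ge a$; joining over admissible $a$ gives $\mu(x)\ge\hat{\eta}(x)$. For (iii), the choice $a=\eta(x)\le a_{0}$ puts $x\in\eta_{a}\subseteq\langle\eta_{a}\rangle$, so this particular $a$ already contributes to the join defining $\hat{\eta}(x)$, yielding $\hat{\eta}(x)\ge\eta(x)$.

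Step (ii) is the heart of the proof. Writing $A_{x}=\{a\le a_{0}\mid x\in\langle\eta_{a}\rangle\}$, the inverse axiom is immediate because each $\langle\eta_{a}\rangle$ is a subgroup, so $A_{x}=A_{x^{-1}}$ and therefore $\hat{\eta}(x^{-1})=\hat{\eta}(x)$. For the product axiom, I would pick arbitrary $a\in A_{x}$ and $b\in A_{y}$; by monotonicity $x,y\in\langle\eta_{a\wedge b}\rangle$, hence $xy\in\langle\eta_{a\wedge b}\rangle$, so $a\wedge b\in A_{xy}$ and $\hat{\eta}(xy)\ge a\wedge b$. Joining over all such pairs and applying complete distributivity to pull the meet past the two joins gives $\hat{\eta}(xy)\ge(\bigvee A_{x})\wedge(\bigvee A_{y})=\hat{\eta}(x)\wedge\hat{\eta}(y)$. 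This appeal to complete distributivity is the one nontrivial technical point; everything else is elementary.

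Finally, for (iv), suppose $\nu\in L(\mu)$ with $\eta\subseteq\nu$. For each $a\le a_{0}$, $\eta_{a}\subseteq\nu_{a}$ and, by Theorem~\ref{lev_gp} applied to $\nu$, $\nu_{a}$ is a subgroup of $G$, so $\langle\eta_{a}\rangle\subseteq\nu_{a}$. Consequently every $a\in A_{x}$ satisfies $a\le\nu(x)$, whence $\hat{\eta}(x)\le\nu(x)$. Combined with (i)--(iii), this shows $\hat{\eta}$ is the smallest $L$-subgroup of $\mu$ (equivalently of $G$) containing $\eta$, i.e., $\hat{\eta}=\langle\eta\rangle$.
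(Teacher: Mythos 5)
The paper does not prove Theorem~\ref{gen_form}; it is recalled verbatim from \cite{ajmal_gen}, so there is no in-paper argument to compare against. Your proof is correct and is essentially the standard one for this result: the level-cut monotonicity $\langle\eta_{a'}\rangle\subseteq\langle\eta_{a}\rangle$ for $a\le a'$, the closure axioms via $A_{x}=A_{x^{-1}}$ and $a\wedge b\in A_{xy}$, the distributive law $(\bigvee A_{x})\wedge(\bigvee A_{y})=\bigvee_{a,b}(a\wedge b)$ (which the paper's standing hypothesis of complete distributivity supplies), and minimality via $\langle\eta_{a}\rangle\subseteq\nu_{a}$. The only points you leave implicit are the degenerate cases --- $A_{x}=\emptyset$ (where the join is $0$ and every inequality is trivial) and $\eta_{a}=\emptyset$ (where $\langle\eta_{a}\rangle=\{e\}$ and $e\in\nu_{a}$ since $\nu(e)\ge a_{0}\ge a$) --- both of which are harmless.
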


\begin{theorem}(\cite{ajmal_gen}, Theorem 3.7)
	\label{gen_sup}
	Let $\eta \in L^{\mu}$ and possesses the sup-property. If $a_0 = \mathop{\vee}\limits_{x \in G}\{\eta(x)\}$, then for all $b \leq a_0$, $\langle \eta_b \rangle = \langle \eta \rangle_b$.
\end{theorem}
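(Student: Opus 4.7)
The plan is to apply the explicit formula for $\langle \eta \rangle = \hat\eta$ from Theorem \ref{gen_form} and reduce the claim to the attainment of a certain supremum under the sup-property. The inclusion $\langle \eta_b \rangle \subseteq \langle \eta \rangle_b$ is immediate from Theorem \ref{gen_form}: if $x \in \langle \eta_b \rangle$ with $b \leq a_0$, then $b$ itself appears in the set $\{a \leq a_0 : x \in \langle \eta_a \rangle\}$, forcing $\hat\eta(x) \geq b$ and hence $x \in \langle \eta \rangle_b$. No sup-property is used in this direction.

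For the reverse inclusion, fix $x \in \langle \eta \rangle_b$, so that $\hat\eta(x) \geq b$. Unwinding the formula of Theorem \ref{gen_form} in the natural way gives
\[
\hat\eta(x) \;=\; \vee\bigl\{\wedge_i \eta(y_i) : x = y_1^{\epsilon_1}\cdots y_n^{\epsilon_n}\bigr\},
\]
because each decomposition $x = y_1^{\epsilon_1}\cdots y_n^{\epsilon_n}$ certifies $x \in \langle \eta_{a^*}\rangle$ for $a^* = \wedge_i \eta(y_i)$, and conversely any $a$ with $x \in \langle \eta_a\rangle$ is bounded by such a meet. The reverse inclusion therefore reduces to producing a \emph{single} decomposition of $x$ whose associated meet of $\eta$-values is at least $b$: for then every $y_i$ lies in $\eta_b$ and $x \in \langle \eta_b \rangle$.

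Here the sup-property enters. In the totally ordered case, $\wedge_i \eta(y_i)$ equals $\eta(y_{i_0})$ for some ``weakest link'' index $i_0$, so the supremum above becomes a supremum of $\eta$-values over the subset $Y \subseteq G$ of elements arising as weakest links of some decomposition of $x$; the sup-property then yields $y^* \in Y$ attaining this supremum, and the corresponding decomposition has all $\eta$-values at least $\hat\eta(x) \geq b$. For a general completely distributive $L$ the same strategy works after passing to $D = \mathrm{Im}(\eta)$ and observing that under the sup-property every subset $D_0 \subseteq D$ attains its supremum (choose preimages $Y_0 \subseteq G$ with $\eta(Y_0) = D_0$ and apply sup-property to $Y_0$), so that the supremum of meet-values, which always exceeds or equals a supremum of attained $\eta$-values within $D$, is itself realized. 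The main obstacle is exactly this transition from a supremum of $\eta$-values, where the sup-property applies directly, to the supremum of finite meets of $\eta$-values that appears in the formula for $\hat\eta$; the chain case dispatches this by the weakest-link trick, while the general case requires combining the attainment property within $\mathrm{Im}(\eta)$ with the finite-meet observation above.
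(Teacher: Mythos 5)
The paper does not actually prove Theorem \ref{gen_sup}: it is quoted from \cite{ajmal_gen} (Theorem 3.7) as a preliminary, so there is no internal proof to compare yours against. On its own merits, your argument is essentially sound and follows the standard route. The easy inclusion $\langle\eta_b\rangle\subseteq\langle\eta\rangle_b$ is correct, as is the rewriting of $\hat\eta(x)$ as a supremum of finite meets $\wedge_i\eta(y_i)$ over decompositions of $x$ (and under the sup-property every level set $\eta_a$ with $a\le a_0$ is non-empty, so no degenerate case arises there). The only place you should tighten is the ``general completely distributive $L$'' paragraph: the sentence asserting that the supremum of meet-values ``exceeds or equals a supremum of attained $\eta$-values within $D$'' and is therefore realized does not parse as a proof step. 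The clean way to finish is to notice that the attainment property you correctly establish for subsets of $D=\mathrm{Im}(\eta)$ already forces $D$ to be a chain: for any $d_1,d_2\in D$, the join $d_1\vee d_2$ must be attained in $\{d_1,d_2\}$, so $d_1$ and $d_2$ are comparable. Hence every finite meet $\wedge_i\eta(y_i)$ is literally one of the $\eta(y_i)$ (a weakest link), your ``totally ordered case'' argument is in fact the general argument, and the supremum of meet-values becomes a supremum of $\eta$-values over an honest subset of $G$, to which the sup-property applies directly to produce the single decomposition witnessing $x\in\langle\eta_b\rangle$. With that one observation made explicit, the proof is complete.
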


\section{Pronormal $L$-subgroups}

The notion of pronormal subgroups in classical group theory utilizes the concept of conjugate subgroups. In fuzzy group theory, the notion of conjugate fuzzy subgroup was introduced by Mukherjee and Bhattacharya \cite{mukherjee_some}. The conjugate developed therein is by a crisp point of the parent group $G$ rather than a fuzzy point and thus could not be applied in the development of pronormal fuzzy subgroups. Indeed, the pronormality of fuzzy subgroups, introduced by Abou-Zaid \cite{abouzaid_Pronormal}, was developed through level subsets, which does not reveal any information regarding their structure.

In \cite{jahan_conj}, the authors have introduced the conjugate of an $L$-subgroup of an $L$-group by an $L$-point. This definition has been shown to be highly compatible with other notions in $L$-group theory such as normal $L$-subgroups of an $L$-group, normalizer of an $L$-subgroup of an $L$-group \cite{ajmal_nor} and maximal $L$-subgroup of an $L$-group \cite{jahan_max}. Moreover, this definition removes the shortcomings of the conjugate introduced in \cite{mukherjee_some} and can easily be utilized to develop pronormal $L$-subgroups of an $L$-group.  

\begin{definition}(\cite{jahan_conj})
	Let $\eta$ be an $L$-subgroup of $\mu$ and $a_z$ be an $L$-point of $\mu$. The conjugate $\eta^{a_z}$ of $\eta$ with respect to $a_z$ is the $L$-subset of $G$ defined by
	\[ \eta^{a_z}(x) = a \wedge \eta(zxz^{-1}) \quad \text{ for all } x \in G. \]	
\end{definition}

\noindent We remark here that for an $L$-subgroup $\eta$ of $\mu$ and an $L$-point $a_z$ of $\mu$, the conjugate $\eta^{a_z}$ forms an $L$-subgroup of $\mu$. Moreover, $\text{tip}(\eta^{a_z}$) = $a \wedge \text{tip}(\eta)$, since
\[ \eta^{a_z}(e) = a \wedge \eta(zez^{-1}) = a \wedge \eta(e). \]

\noindent We also recall the level subset characterization for conjugate $L$-subgroups from \cite{jahan_conj}. Here, we note that for a subgroup $H$ of $G$ and for $x \in G$, $H^x$ denotes the conjugate of $H$ with respect to $x$.

\begin{theorem}(\cite{jahan_conj})\label{lvl_conj}
	Let $\eta, \nu \in L(\mu)$ and $a \in L$ such that $\text{tip}(\nu) = a \wedge \text{tip}(\eta)$. Then, $\nu=\eta^{a_z}$ for $a_z \in \mu$ if and only if $\nu_t = {\eta_t}^{z^{-1}}$ for all $t \leq \text{tip}(\nu)$.
\end{theorem}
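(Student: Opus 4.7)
The plan is to prove both implications by directly manipulating the defining formula $\eta^{a_z}(x) = a \wedge \eta(zxz^{-1})$ and invoking the standard correspondence between an $L$-subset and its family of level subsets. The observation that makes everything transparent is the hypothesis $\text{tip}(\nu) = a \wedge \text{tip}(\eta)$, which forces $t \leq a$ whenever $t \leq \text{tip}(\nu)$; so the meet with $a$ essentially disappears when passing between values and level subsets.

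For the forward direction, assume $\nu = \eta^{a_z}$ and fix $t \leq \text{tip}(\nu)$. Then $x \in \nu_t$ is equivalent to $a \wedge \eta(zxz^{-1}) \geq t$, which splits as $a \geq t$ together with $\eta(zxz^{-1}) \geq t$. The first inequality holds automatically since $t \leq \text{tip}(\nu) \leq a$, so the substantive content is $zxz^{-1} \in \eta_t$, i.e., $x \in z^{-1}\eta_t z = (\eta_t)^{z^{-1}}$. This immediately yields the desired equality $\nu_t = (\eta_t)^{z^{-1}}$.

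For the backward direction, I would recover the values of $\nu$ from its level subsets via the standard identity $\nu(x) = \bigvee\{t \in L \mid x \in \nu_t\}$. Since $\nu(x) \leq \text{tip}(\nu)$, this supremum may be restricted to $t \leq \text{tip}(\nu)$, where the hypothesis converts $x \in \nu_t$ into $zxz^{-1} \in \eta_t$, i.e., $\eta(zxz^{-1}) \geq t$. Invoking complete distributivity of $L$, the resulting supremum evaluates to $\text{tip}(\nu) \wedge \eta(zxz^{-1})$. Using $\text{tip}(\nu) = a \wedge \text{tip}(\eta)$ together with the trivial bound $\eta(zxz^{-1}) \leq \text{tip}(\eta)$, this collapses to $a \wedge \eta(zxz^{-1}) = \eta^{a_z}(x)$, as required.

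The main obstacle is nothing deep but rather careful bookkeeping around tips: one must consistently exploit $\nu(x) \leq \text{tip}(\nu) \leq a$ to strip the factor $a$ on the level-subset side, and invoke $\eta(zxz^{-1}) \leq \text{tip}(\eta)$ to absorb $\text{tip}(\eta)$ at the end of the backward direction. Beyond the elementary set-theoretic identity $(\eta_t)^{z^{-1}} = \{x \in G \mid zxz^{-1} \in \eta_t\}$, no additional machinery is required.
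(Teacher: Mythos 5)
Your argument is correct, and since the paper does not reprove Theorem~\ref{lvl_conj} (it is recalled from \cite{jahan_conj} without proof), there is nothing to compare against beyond noting that your level-subset manipulation --- splitting $a \wedge \eta(zxz^{-1}) \geq t$ using $t \leq \text{tip}(\nu) \leq a$ in one direction, and reconstructing $\nu(x)$ as $\bigvee\{t \mid x \in \nu_t\}$ in the other --- is exactly the standard argument one expects here. One cosmetic remark: complete distributivity is not actually needed in the backward direction, since the set $\{t \mid t \leq \text{tip}(\nu) \wedge \eta(zxz^{-1})\}$ contains its supremum.
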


\noindent We are now ready to define the notion of pronormal $L$-subgroups.

\begin{definition}
	Let $\mu \in L(G)$. An $L$-subgroup $\eta$ of $\mu$ is said to be a pronormal $L$-subgroup of $\mu$ if for every $L$-point $a_x \in \mu$, there exists an $L$-point $b_y \in \langle \eta, \eta^{a_x} \rangle$ such that $\eta^{b_y} = \eta^{a_x}$.
\end{definition}

\noindent Here, we note that for $L$-subgroups $\eta$ and $\nu$ of $\mu$, $\langle \eta, \nu \rangle$ denotes the $L$-subgroup of $\mu$ generated by $\eta \cup \nu$. 

Our definition of pronormal $L$-subgroups is motivated by the following:

\begin{theorem}
	Let $H$ and $K$ be subgroups of $G$ such that $H \subseteq K$. Then, $H$ is a pronormal subgroup of $K$ if and only if $1_H$ is a pronormal $L$-subgroup of $1_K$.
\end{theorem}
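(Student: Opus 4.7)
The plan is to reduce the $L$-theoretic condition of pronormality to its classical counterpart by passing to level subsets, using Theorem~\ref{lvl_conj} to translate conjugate $L$-subgroups and Theorem~\ref{gen_form} to translate the generated $L$-subgroup. Throughout I would fix an $L$-point $a_x \in 1_K$ and work out $(1_H)^{a_x}$ and $\langle 1_H, (1_H)^{a_x}\rangle$ explicitly.

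First, a direct computation from the definition of the conjugate gives that $(1_H)^{a_x}$ takes the value $a$ on $x^{-1}Hx$ and the value $0$ elsewhere; in particular, its tip is $a$ and each of its nonzero level subsets is $x^{-1}Hx$. Consequently the pointwise union $1_H \cup (1_H)^{a_x}$ has level subsets $H$ for $a < t \le 1$ and $H \cup x^{-1}Hx$ for $0 < t \le a$. Applying Theorem~\ref{gen_form} level by level, I would conclude that $\sigma := \langle 1_H, (1_H)^{a_x}\rangle$ takes value $1$ on $H$, value $a$ on $\langle H, x^{-1}Hx\rangle \setminus H$, and value $0$ outside $\langle H, x^{-1}Hx\rangle$.

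With these formulas in hand, the existence of $b_y \in \sigma$ with $(1_H)^{b_y} = (1_H)^{a_x}$ unpacks cleanly. Matching tips forces $b = a$; Theorem~\ref{lvl_conj} then forces the corresponding nonzero level subsets to agree, i.e.\ $y^{-1}Hy = x^{-1}Hx$; and the membership $a_y \in \sigma$ forces $y \in \langle H, x^{-1}Hx\rangle$ (with the case $a = 0$ being vacuous since both conjugates are then zero). Hence the $L$-pronormality of $1_H$ in $1_K$ is equivalent to the following classical statement: for every $x \in K$ there exists $y \in \langle H, x^{-1}Hx\rangle$ such that $y^{-1}Hy = x^{-1}Hx$. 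Replacing $x$ by $x^{-1}$ (a bijection of $K$) and $y$ by $y^{-1}$ (an involution of the subgroup $\langle H, x^{-1}Hx\rangle$) brings this to the standard form of pronormality of $H$ in $K$, so both implications follow simultaneously.

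The main obstacle will be the level-set description of $\sigma = \langle 1_H, (1_H)^{a_x}\rangle$ via Theorem~\ref{gen_form}, since this requires handling the three separate value bands $1$, $a$, and $0$ correctly and ruling out that the generation process raises values on elements outside $\langle H, x^{-1}Hx\rangle$. Once that computation is secured, the rest is a transparent dictionary between the $L$-theoretic and classical formulations.
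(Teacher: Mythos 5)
Your proposal is correct, and it reaches the same underlying reduction as the paper --- pronormality of $1_H$ in $1_K$ is ultimately a statement about the subgroups $H$, $x^{-1}Hx$ and $\langle H, x^{-1}Hx\rangle$ --- but you organize the argument differently. The paper never writes $\langle 1_H, (1_H)^{a_x}\rangle$ in closed form: in the forward direction it verifies $a_{y^{-1}} \in \langle 1_H, (1_H)^{a_x}\rangle$ by decomposing $y^{-1}$ into generators and realizing $a_{y^{-1}}$ as a set product of $L$-points of $1_H \cup (1_H)^{a_x}$, then checks $(1_H)^{a_{y^{-1}}} = (1_H)^{a_x}$ pointwise by cases; in the converse it extracts a word decomposition from Theorem~\ref{gen_form} and again argues pointwise. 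Your route instead computes $\sigma = \langle 1_H, (1_H)^{a_x}\rangle$ explicitly (value $1$ on $H$, $a$ on $\langle H, x^{-1}Hx\rangle \setminus H$, $0$ outside), which is a slightly stronger intermediate fact; once it is in hand, both implications collapse into one dictionary, with the tip comparison forcing $b=a$ and Theorem~\ref{lvl_conj} replacing the paper's hand computation of the conjugate equality. This buys uniformity and transparency at the cost of having to justify the three value bands of $\sigma$, which you correctly identify as the crux. Two small points to tidy in a full write-up: in a general complete lattice the level subsets of $1_H \cup (1_H)^{a_x}$ equal $H$ for \emph{every} nonzero $t \not\leq a$, not only for $t > a$ (incomparable $t$ occur), though this does not change the supremum computed via Theorem~\ref{gen_form}; and you should record that $a_x \in 1_K$ with $a \neq 0$ forces $x \in K$, so the quantifier over $L$-points ranges over exactly the elements of $K$.
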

\begin{proof}
	($\Rightarrow$) Let $a_x \in 1_K$. If $a=0$, then there is nothing to show. Hence suppose that $a>0$. Then, ${1_K}(x) \geq a>0$, that is, $x \in K$. Since $K$ is a subgroup of $G$, $x^{-1} \in K$. Next, since $H$ is a pronormal subgroup of $K$, there exists $y \in \langle H, H^{x^{-1}} \rangle$ such that $H^y = H^{x^{-1}}$. We show that $a_{y^{-1}} \in \langle 1_H, {1_H}^{a_x} \rangle$ and ${1_H}^{a_{y^{-1}}} = {1_H}^{a_x}$.
	
	\noindent Firstly, since $y \in \langle H, H^{x^{-1}} \rangle$, $y^{-1} \in \langle H, H^{x^{-1}} \rangle$. Therefore,
	\[ y^{-1} = {y_1}{y_2}\ldots{y_n}, \text{ where } {y_i} \text{ or } {y_i}^{-1} \in H \cup H^{x^{-1}}. \]
	Note that if $y_i \in H$, then $1_H(y_i) = 1 \geq a$ and hence $a_{y_i} \in 1_H$. On the other hand, if $y_i \in H^{x^{-1}}$, then $1_H(xy_ix^{-1})=1$. Thus	${1_H}^{a_x}(y_i) = a \wedge 1_H(xy_ix^{-1}) = a$.	Hence, $a_{y_i} \in ({1_H})^{a_x}$. Therefore, $a_{y_i} \in 1_H \cup ({1_H})^{a_x}$ for all $i=1,2,\ldots,n.$	Thus,
	\[ a_{y^{-1}} = {a_{y_1}} \circ {a_{y_2}} \circ \ldots \circ {a_{y_n}} \in \langle 1_H, ({1_H})^{a_x} \rangle. \]
	Next, let $g \in G$. If $({1_H})^{a_x}(g) = 0$, then $g \notin H^{x^{-1}} = H^y$. Thus $1_H(y^{-1}gy) = 0$ and hence ${1_H}^{a_{y^{-1}}}(g) = a \wedge 1_H(y^{-1}gy) = 0.$ On the other hand, if $({1_H})^{a_x}(g) > 0$, then $g \in H^{x^{-1}}$. Thus $({1_H})^{a_x}(g) = a$. Now $H^{x^{-1}} = H^y$ implies $y^{-1}gy \in H$. Thus 
	\[ ({1_H})^{a_{y^{-1}}}(g) = a \wedge 1_H(y^{-1}gy) = a = ({1_H})^{a_x}(g). \]
	$(\Leftarrow)$ Let $x \in K$. Then, $1_{x^{-1}} \in 1_K$. Thus there exists $a_y \in \langle 1_H, {1_H}^{1_{x^{-1}}} \rangle$ such that ${1_H}^{a_y} = {1_H}^{1_{x^{-1}}}$. We claim that $y^{-1} \in \langle H, H^x \rangle$ and $H^{y^{-1}} = H^x$.
	Firstly, since ${1_H}^{a_y}(e) = {1_H}^{1_{x^{-1}}}$, we must have $a=1$. Now, $1_y \in \langle 1_H, {1_H}^{1_{x^{-1}}} \rangle$ implies $\langle 1_H, {1_H}^{1_{x^{-1}}} \rangle (y^{-1}) = 1$. By Theorem \ref{gen_form},
	\[ \mathop{\vee}\limits_{c \leq 1}{\left\{c \mid y^{-1} \in \left\langle (1_H \cup {1_H}^{1_{x^{-1}}})_c\right\rangle\right\}} = 1. \]
	Hence there exists $c>0$ such that $y^{-1} \in \langle (1_H \cup {1_H}^{1_{x^{-1}}})_c\rangle$. Then,
	\[ y^{-1} = y_1 y_2 \ldots y_n, \]
	where $y_i \text{ or } {y_i}^{-1} \in (1_H \cup {1_H}^{1_{x^{-1}}})_c.$ Thus $(1_H \cup {1_H}^{1_{x^{-1}}})({y_i}^{-1}) \geq c > 0$. This implies ${1_H}(y_i) > 0$ or ${1_H}^{1_{x^{-1}}}(y_i) > 0$. Therefore, either $y_i \in H$ or $y_i \in H^x$. Hence
	\[ y^{-1} = y_1 y_2 \ldots y_n, \text{ where } y_i \text{ or } {y_i}^{-1} \in (H \cup H^x).  \]
	We conclude that $y^{-1} \in \langle H, H^x \rangle$. Next, note that if $g \in H^{y^{-1}}$, then $ygy^{-1} \in H$. Thus ${1_H}^{1_y}(g) = 1$. Since ${1_H}^{1_y} = {1_H}^{1_{x^{-1}}}$, ${1_H}^{1_{x^{-1}}}(g) = 1$. Thus $g \in H^x$. Hence $H^{y^{-1}} \subseteq H^x$. Similar argument shows that $H^x \subseteq H^{y^{-1}}$. Hence $H^{y^{-1}} = H^x$.
\end{proof}

\noindent Now, we provide an example to demonstrate the pronormal $L$-subgroup of an $L$-group.

\begin{example} \label{example1}
	Let $M=\{ l,f,a,b,c,d,u \}$ be the lattice given by the following figure: 
	\begin{center}
		\includegraphics[scale=1.0]{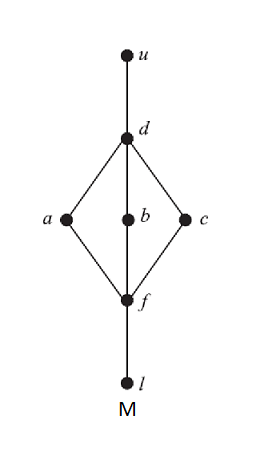}\\		
	\end{center}
	\noindent Let $G=S_4$, the group of all permutations of the set $\{1,2,3,4\}$ with identity element $e$.
	
	\noindent Let $D_4^1 = \langle (24), (1234) \rangle,$ $D_4^2 = \langle (12), (1324) \rangle,$ $D_4^3 = \langle (23), (1342) \rangle$ denote the dihedral subgroups of $G$ and $V_4 = \{e, (12)(34), (13)(24), (14)(23)\}$ denote the Klein-4 subgroup of $G$. 
	
	Define the $L$-subset $\mu$ of $G$ as follows:
	\[ \mu(x) = \begin{cases}
		u &\text{if } x \in V_4,\\
		d &\text{if } x \in S_4 \setminus V_4.
	\end{cases} \]
	Since $\mu_t$ is a subgroup of $G$ for all $t \leq u$, by Theorem \ref{lev_gp}, $\mu \in L(G)$. Next, let $\eta$ be the $L$-subset of $\mu$ be defined by 
	\[ \eta(x) = \begin{cases}
		u &\text{if } x=e,\\
		d &\text{if } x \in V_4 \setminus \{e\},\\
		a &\text{if } x \in D_4^1 \setminus V_4,\\
		b &\text{if } x \in D_4^2 \setminus V_4,\\
		c &\text{if } x \in D_4^3 \setminus V_4,\\
		f &\text{if } x \in S_4 \setminus \mathop{\cup}\limits_{i=1}^3 D_4^i.
	\end{cases} \]
	Since $\eta_t$ is a subgroup of $\mu_t$ for all $t \leq u$, by Theorem \ref{lev_sgp}, $\eta$ is an $L$-subgroup of $\mu$. Now, it can be easily verified that $\eta$ is a pronormal $L$-subgroup of $\mu$. For instance, consider the $L$-point $d_{(123)} \in \mu$. Then,
	\begin{equation*}
		\begin{split}
			\eta^{d_{(123)}}(x) &= d \wedge \eta((123)x(132)) \\
			&= \begin{cases}
				d &\text{if } x \in V_4,\\
				a &\text{if } x \in D_4^3 \setminus V_4,\\
				b &\text{if } x \in D_4^1 \setminus V_4,\\
				c &\text{if } x \in D_4^2 \setminus V_4,\\
				f &\text{if } x \in S_4 \setminus \mathop{\cup}\limits_{i=1}^3 D_4^i.
			\end{cases} 
		\end{split}
	\end{equation*}
	Thus 
	\begin{equation*}
		\begin{split}
			(\eta \cup \eta^{d_{(123)}})(x) = \begin{cases}
				u &\text{if } x =e,\\
				d &\text{if } x \in \mathop{\cup}\limits_{i=1}^3 D_4^i,\\
				f &\text{if } x \in S_4 \setminus \mathop{\cup}\limits_{i=1}^3 D_4^i.
			\end{cases} 
		\end{split}
	\end{equation*}
	Hence 
	\begin{equation*}
		\begin{split}
			\langle \eta \cup \eta^{d_{(123)}} \rangle(x) = \begin{cases}
				u &\text{if } x =e,\\
				d &\text{if } x \in S_4.
			\end{cases} 
		\end{split}
	\end{equation*}
	From this, we see that $d_{(123)} \in \langle \eta \cup \eta^{d_{(123)}} \rangle$. Similarly, for the $L$-point $u_{(12)(34)} \in \mu$, we can see that 
	\[ \eta^{u_{(12)(34)}} = u \wedge \eta((12)(34)x(12)(34)) = \eta. \]
	Hence $\langle \eta, \eta^{u_{(12)(34)}} \rangle = \eta$. Now, 
	\[ u_{(12)(34)} \notin \langle \eta, \eta^{u_{(12)(34)}} \rangle, \] 
	however, $u_e \in \langle \eta, \eta^{u_{(12)(34)}} \rangle$ such that $\eta^{u_e} = \eta^{u_{(12)(34)}}$. Proceeding in a similar manner, pronormality of $\eta$ follows.   
\end{example}

\noindent In Theorem \ref{hom_prn}, we discuss the image of a pronormal $L$-subgroup under group homomorphisms. For this, we recall Lemma \ref{hom_conj} from \cite{jahan_conj}.

\begin{lemma}\label{hom_conj} (\cite{jahan_conj})
	Let $f : G \rightarrow H$ be a group homomorphism and $\mu \in L(G)$. Then, for $\eta \in L(\mu)$ and $a_z \in \mu$, the $L$-subgroup $f(\eta^{a_z})$ is a conjugate $L$-subgroup of $f(\eta)$ in $f(\mu)$. In fact, 
	\[ f(\eta^{a_z}) = f(\eta)^{a_{f(z)}}. \]
\end{lemma}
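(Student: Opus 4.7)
The plan is to verify the asserted identity $f(\eta^{a_z}) = f(\eta)^{a_{f(z)}}$ by evaluating both sides pointwise at an arbitrary $y \in H$. Before doing this I would quickly note that $a_{f(z)}$ is a legitimate $L$-point of $f(\mu)$: since $a_z \in \mu$ means $\mu(z) \ge a$, the definition of the image gives $f(\mu)(f(z)) \ge \mu(z) \ge a$, so the right-hand side is a well-defined conjugate.

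Unfolding the definitions produces
\[
f(\eta^{a_z})(y) \;=\; \bigvee_{x \in f^{-1}(y)} \bigl(a \wedge \eta(zxz^{-1})\bigr)
\]
and
\[
f(\eta)^{a_{f(z)}}(y) \;=\; a \wedge f(\eta)\bigl(f(z)\,y\,f(z)^{-1}\bigr) \;=\; a \wedge \bigvee_{w \in f^{-1}(f(z)yf(z)^{-1})} \eta(w).
\]
Complete distributivity of $L$ allows me to pull $a$ out of the supremum in the first expression. The heart of the argument is then a straightforward bijection: since $f$ is a homomorphism, the map $x \mapsto zxz^{-1}$ sends $f^{-1}(y)$ bijectively onto $f^{-1}(f(z)yf(z)^{-1})$, with inverse $w \mapsto z^{-1}wz$. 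Under this reindexing, the family $\{\eta(zxz^{-1})\}_{x \in f^{-1}(y)}$ coincides exactly with $\{\eta(w)\}_{w \in f^{-1}(f(z)yf(z)^{-1})}$, so the two suprema agree and the desired equality drops out.

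The only potential snag, which I would flag explicitly, is the empty-preimage case. If $f^{-1}(y) = \emptyset$, the left-hand side equals $0$ by the convention on the supremum of the empty set; by the same bijection, $f^{-1}(f(z)yf(z)^{-1})$ is empty as well (any preimage there would be conjugated by $z^{-1}$ into a preimage of $y$), so the right-hand side also vanishes. No deeper obstruction arises: the lemma is essentially a direct consequence of the homomorphism property combined with the complete distributivity of $L$, and once the pointwise equality is in hand, the assertion that $f(\eta^{a_z})$ is a conjugate of $f(\eta)$ in $f(\mu)$ is immediate.
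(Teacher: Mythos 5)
Your proof is correct. Note that the paper itself gives no proof of this lemma --- it is recalled verbatim from \cite{jahan_conj} --- so there is nothing internal to compare against; your direct pointwise verification is the natural argument one would expect. The key step, that $x \mapsto zxz^{-1}$ carries $f^{-1}(y)$ bijectively onto $f^{-1}\bigl(f(z)yf(z)^{-1}\bigr)$, is exactly right, and you correctly dispose of the empty-preimage case and of the well-definedness of $a_{f(z)}$ as an $L$-point of $f(\mu)$. The only cosmetic remark is that you do not need the full strength of complete distributivity: the identity $a \wedge \bigvee_i b_i = \bigvee_i (a \wedge b_i)$ (infinite meet-over-join distributivity) suffices, though of course it is available under the paper's standing hypothesis on $L$.
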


\begin{lemma}\label{hom_lev}
	Let $f:G \rightarrow H$ be a group homomorphism and $\mu \in L(G)$. Then, for $\eta \in L(\mu)$, 
	\[ f(\eta_t) \subseteq f(\eta)_t \]
	for all $t \leq \eta(e)$.
\end{lemma}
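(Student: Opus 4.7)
The plan is to prove the lemma directly from the definitions of image $L$-subset and level subset, using $t \le \eta(e)$ only to ensure that $\eta_t$ is non-empty (so that its image under $f$ makes sense as a genuine subset of $H$).

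First I would fix $t \le \eta(e)$ and take an arbitrary $y \in f(\eta_t)$, with the intention of showing $f(\eta)(y) \ge t$, which is exactly the condition $y \in f(\eta)_t$. By the definition of the image of a set under a map, there exists $x \in \eta_t$ with $f(x) = y$; by the definition of the level subset, $\eta(x) \ge t$.

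Next, I would invoke the definition of the image $L$-subset recalled in the preliminaries, namely
\[ f(\eta)(y) = \bigvee_{x' \in f^{-1}(y)} \eta(x'). \]
Since $x \in f^{-1}(y)$, this supremum is at least $\eta(x) \ge t$, so $f(\eta)(y) \ge t$, giving $y \in f(\eta)_t$ as required.

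There is no real obstacle here; the only subtle point is the role of the hypothesis $t \le \eta(e)$, which I would flag briefly: it guarantees $e \in \eta_t$ and hence $\eta_t \ne \emptyset$, so the statement is non-vacuous and $f(\eta_t)$ is a meaningful subset of $H$. It is also worth remarking, though not necessary for the proof, that the reverse inclusion $f(\eta)_t \subseteq f(\eta_t)$ may fail without the sup-property on $\eta$, which is presumably why the lemma is stated as a one-sided containment in preparation for the upcoming Theorem \ref{hom_prn}.
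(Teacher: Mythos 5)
Your argument is correct and is exactly the standard unwinding of the definitions that the paper leaves implicit (the lemma is stated without proof there): any $y\in f(\eta_t)$ has a preimage $x$ with $\eta(x)\ge t$, so $f(\eta)(y)=\bigvee_{x'\in f^{-1}(y)}\eta(x')\ge t$. Your side remarks about the role of $t\le\eta(e)$ and the failure of the reverse inclusion without the sup-property are also accurate.
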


\begin{theorem}\label{hom_prn}
	Let $f : G \rightarrow H$ be a surjective group homomorphism. Let $\mu \in L(G)$ such that $\mu$ possesses sup-property. If $\eta$ is a pronormal $L$-subgroup of $\mu$, then $f(\eta)$ is a pronormal $L$-subgroup of f($\mu$).
\end{theorem}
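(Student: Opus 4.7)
The plan is to transfer the pronormality witness across $f$. Starting from an arbitrary $L$-point $a_h \in f(\mu)$, I would use the sup-property of $\mu$ to select $x_0 \in f^{-1}(h)$ with $\mu(x_0) = f(\mu)(h) \geq a$, so that $a_{x_0} \in \mu$. Pronormality of $\eta$ in $\mu$ then supplies an $L$-point $b_y \in \langle \eta, \eta^{a_{x_0}} \rangle$ satisfying $\eta^{b_y} = \eta^{a_{x_0}}$, and my candidate witness at the target level is $b_{f(y)}$. Two things then need to be verified: that $b_{f(y)} \in \langle f(\eta), f(\eta)^{a_h} \rangle$, and that $f(\eta)^{b_{f(y)}} = f(\eta)^{a_h}$.

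The conjugate equality is immediate from Lemma \ref{hom_conj}: applying $f$ to $\eta^{b_y} = \eta^{a_{x_0}}$ gives $f(\eta)^{b_{f(y)}} = f(\eta)^{a_{f(x_0)}} = f(\eta)^{a_h}$, since $f(x_0) = h$. For the membership step, I would use Theorem \ref{gen_form} to translate $\langle \eta, \eta^{a_{x_0}} \rangle(y) \geq b$ into information about the crisp subgroups $\langle (\eta \cup \eta^{a_{x_0}})_c \rangle$. By Theorem \ref{lvl_conj}, for $c \leq a \wedge \eta(e)$ one has $(\eta^{a_{x_0}})_c = \eta_c^{x_0^{-1}}$; for larger $c \leq \eta(e)$ this level set is empty, so $(\eta \cup \eta^{a_{x_0}})_c$ reduces to $\eta_c$. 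In either range, if $y$ is a word in $\eta_c \cup \eta_c^{x_0^{-1}}$, then $f(y)$ is the corresponding word in $f(\eta_c) \cup f(\eta_c)^{h^{-1}}$, using the classical identity $f(\eta_c^{x_0^{-1}}) = f(\eta_c)^{f(x_0)^{-1}} = f(\eta_c)^{h^{-1}}$. Lemma \ref{hom_lev} gives $f(\eta_c) \subseteq f(\eta)_c$, and Theorem \ref{lvl_conj} applied to $f(\eta)^{a_h}$ identifies $f(\eta)_c^{h^{-1}}$ with $(f(\eta)^{a_h})_c$ in the applicable range. Hence $f(y) \in \langle (f(\eta) \cup f(\eta)^{a_h})_c \rangle$ for every $c$ witnessing membership of $y$.

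Appealing to Theorem \ref{gen_form} at the target then yields $\langle f(\eta), f(\eta)^{a_h} \rangle(f(y)) \geq \langle \eta, \eta^{a_{x_0}} \rangle(y) \geq b$, which is precisely $b_{f(y)} \in \langle f(\eta), f(\eta)^{a_h} \rangle$. I expect the main obstacle to be the bookkeeping in the level-set step: distinguishing cleanly between the ranges $c \leq a \wedge \eta(e)$ and $a \wedge \eta(e) < c \leq \eta(e)$ so that the decomposition of $(\eta \cup \eta^{a_{x_0}})_c$ is applied only where valid, and checking that the (in general strict) inclusion in Lemma \ref{hom_lev} still suffices for the argument to go through. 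A pleasant feature of this route is that the sup-property of $\mu$ is consumed solely in producing $x_0$, so no auxiliary sup-property for $\eta$ (or for $\eta \cup \eta^{a_{x_0}}$) is needed and Theorem \ref{gen_sup} can be avoided.
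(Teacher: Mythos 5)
Your overall strategy is the same as the paper's: use the sup-property of $\mu$ to lift $a_h$ to an $L$-point $a_{x_0}\in\mu$, invoke pronormality upstairs to get $b_y$, propose $b_{f(y)}$ as the witness, get the conjugate equality from Lemma \ref{hom_conj}, and handle membership via Theorem \ref{gen_form}. The conjugate equality and the final comparison of suprema are fine, and your closing observation (that the sup-property is consumed only in producing $x_0$, so Theorem \ref{gen_sup} is not needed) matches the paper.

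However, the membership step as you describe it has a genuine gap. You pass from $y\in\langle(\eta\cup\eta^{a_{x_0}})_c\rangle$ to ``$y$ is a word in $\eta_c\cup\eta_c^{\,x_0^{-1}}$,'' which implicitly uses $(\eta\cup\eta^{a_{x_0}})_c=\eta_c\cup(\eta^{a_{x_0}})_c$. In a general complete and completely distributive lattice only the inclusion $\supseteq$ holds: $\eta(y_i)\vee\eta^{a_{x_0}}(y_i)\geq c$ does not force $\eta(y_i)\geq c$ or $\eta^{a_{x_0}}(y_i)\geq c$ (take $L$ the diamond $\{0,p,q,1\}$ with $p,q$ incomparable and $c=1=p\vee q$). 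This is exactly why the paper's level-subset characterization of pronormality (Theorem \ref{lev_prn}) has to assume $L$ is upper well ordered, a hypothesis which forces $L$ to be a chain; no such hypothesis is available in Theorem \ref{hom_prn}, where only the sup-property of $\mu$ is assumed. The paper avoids the issue by never decomposing the level set of the union: each generator $y_i$ lies in $(\eta\cup\eta^{a_{x_0}})_c$, so $f(y_i)\in f\bigl((\eta\cup\eta^{a_{x_0}})_c\bigr)\subseteq\bigl(f(\eta\cup\eta^{a_{x_0}})\bigr)_c=\bigl(f(\eta)\cup f(\eta)^{a_h}\bigr)_c$, using Lemma \ref{hom_lev} together with the identity $f(\eta\cup\nu)=f(\eta)\cup f(\nu)$ and Lemma \ref{hom_conj}. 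This lands directly in the level set that Theorem \ref{gen_form} requires at the target, so Theorem \ref{lvl_conj} and the crisp identity $f(\eta_c^{\,x_0^{-1}})=f(\eta_c)^{h^{-1}}$ are not needed at all. If you replace your case analysis on $c$ by this one-line inclusion, the rest of your argument goes through unchanged.
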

\begin{proof}
	Let $a_x \in f(\mu)$. We have to show that there exists an $L$-point $b_y \in \langle f(\eta), f(\eta)^{a_x} \rangle$ such that $f(\eta)^{b_y} = f(\eta)^{a_x}$.	Since $a_x \in f(\mu)$, $f(\mu)(x) \geq a$. By definition,
	\[ f(\mu)(x) = \vee \{ \mu(g) \mid g \in f^{-1}(x) \}. \]
	Let $A = \{ g \in G \mid g \in f^{-1}(x) \}$. Since $f$ is a surjection, $A$ is a non-empty subset of $G$. Since $\mu$ possesses the sup-property, there exists $s \in A$ such that 
	\[ a \leq f(\mu)(x) = \vee \{ \mu(g) \mid g \in A \} = \mu(s). \] 
	Hence $f(s) = x$ and $a_s \in \mu$. Now, since $\eta$ is a pronormal $L$-subgroup of $\mu$, there exists $b_t \in \langle \eta, \eta^{a_s} \rangle$ such that $\eta^{b_t} = \eta^{a_s}$. We claim that $b_{f(t)}$ is the required $L$-point.
	
	\noindent Firstly, we show that $b_{f(t)} \in \langle f(\eta), f(\eta)^{a_x} \rangle$. Since $b_t \in \langle \eta, \eta^{a_s} \rangle$, $\langle \eta, \eta^{a_s} \rangle(t) \geq b$. By Theorem \ref{gen_form},
	\[  \langle \eta, \eta^{a_s} \rangle(t) = \mathop{\vee}\limits_{c \leq \eta(e)}\left\{c \mid t \in \langle(\eta \cup \eta^{a_s})_c \rangle \right\} \]
	Let $c \leq \eta(e)$ such that $t \in \langle (\eta \cup \eta^{a_s})_c \rangle$. Then,
	\[ t = t_1 t_2 \ldots t_n, \text{ where } t_i \text{ or } {t_i}^{-1} \in (\eta \cup \eta^{a_s})_c. \]
	This implies
	\[ f(t) = f(t_1)f(t_2)\ldots f(t_n),\] 
	where $f(t_i) \text{ or } f(t_i)^{-1} \in f((\eta \cup \eta^{a_s})_c)$.
	By Lemma \ref{hom_lev}, $f((\eta \cup \eta^{a_s})_c) \subseteq (f(\eta \cup \eta^{a_s}))_c = (f(\eta) \cup f(\eta^{a_s}))_c$. Also, by Theorem \ref{hom_conj}, $(f(\eta^{a_s})) = f(\eta)^{a_{f(s)}} = f(\eta)^{a_x}$. Hence,
	\[ f(t) = f(t_1)f(t_2)\ldots f(t_n),\] 
	where $f(t_i)$ $f(t_i)^{-1} \in (f(\eta) \cup f(\eta)^{a_x})_c$, that is, $f(t) \in \langle f(\eta) \cup f(\eta^{a_x}))_c \rangle$. Thus
	\begin{equation*}
	\begin{split}
	\hspace{2em}&\hspace{-2em} \langle f(\eta), f(\eta)^{a_x} \rangle (f(t)) \\
			&= \mathop{\vee}\limits_{c \leq f(\eta)(e)}\left\{c \mid f(t) \in \langle(f(\eta) \cup f(\eta)^{a_x})_c \rangle \right\}\\
			&\geq \mathop{\vee}\limits_{c \leq \eta(e)}\left\{c \mid t \in \langle(\eta \cup \eta^{a_s})_c \rangle \right\}\\
			&= \langle \eta, \eta^{a_s} \rangle (t)\\
			&\geq b.  
	\end{split}
	\end{equation*}
	Hence $b_{f(t)} \in \langle f(\eta), f(\eta)^{a_x} \rangle$. Next, since $\eta^{a_s} = \eta^{b_t}$, by Lemma \ref{hom_conj},
	\[ f(\eta)^{a_x} = f(\eta^{a_s}) = f(\eta^{b_t}) = f(\eta)^{b_{f(t)}}. \] 
	Hence $b_{f(t)}$ is the required $L$-point and we conclude that $f(\eta)$ is a pronormal $L$-subgroup of $f(\mu)$.
\end{proof}

\noindent Below, we provide a level subset characterization for pronormal $L$-subgroups. For this, we recall that a lattice $L$ is said to be upper well ordered if every non-empty subset of $L$ contains its supremum. 

\begin{theorem}
	\label{lev_prn}
	Let $L$ be an upper well ordered lattice and $\mu \in L(G)$. If $\eta$ is a pronormal $L$-subgroup of $\mu$, then $\eta_t$ is a pronormal subgroup of $\mu_t$ for all $t \leq \eta(e)$.
\end{theorem}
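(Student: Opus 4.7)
The plan is to reduce the classical pronormality of $\eta_t$ in $\mu_t$ to the $L$-pronormality of $\eta$ in $\mu$, with the main bridges being Theorem \ref{lvl_conj} (to pass between conjugate $L$-subgroups and conjugate level subgroups) and Theorem \ref{gen_sup} (to pass between $L$-generation and classical generation of level subgroups). Fix $t \leq \eta(e)$ and an arbitrary $x \in \mu_t$. By Theorem \ref{lev_gp}, $\mu_t$ is a subgroup of $G$, so $x^{-1} \in \mu_t$; hence $\mu(x^{-1}) \geq t$ and the $L$-point $t_{x^{-1}}$ lies in $\mu$.

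Applying the $L$-pronormality of $\eta$ to $t_{x^{-1}}$ produces an $L$-point $b_y \in \langle \eta, \eta^{t_{x^{-1}}} \rangle$ with $\eta^{b_y} = \eta^{t_{x^{-1}}}$. Comparing tips yields $b \wedge \eta(e) = t \wedge \eta(e) = t$, and in particular $b \geq t$. Since the common tip equals $t$, Theorem \ref{lvl_conj} applies at level $t$ on both sides of $\eta^{b_y} = \eta^{t_{x^{-1}}}$ and gives $\eta_t^{y^{-1}} = \eta_t^{x}$, that is, $y^{-1}\eta_t y = x\eta_t x^{-1}$.

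The main technical step is to show that $y$ actually lies in the classically generated subgroup $\langle \eta_t, x\eta_t x^{-1} \rangle$. Since $L$ is upper well ordered, every $L$-subset of $G$ automatically possesses the sup-property; in particular, so does $\eta \cup \eta^{t_{x^{-1}}}$, whose tip is $\eta(e) \geq t$. Hence Theorem \ref{gen_sup} yields
\[ \langle \eta, \eta^{t_{x^{-1}}} \rangle_t = \langle (\eta \cup \eta^{t_{x^{-1}}})_t \rangle = \langle \eta_t \cup (\eta^{t_{x^{-1}}})_t \rangle = \langle \eta_t, x\eta_t x^{-1} \rangle, \]
where the last equality uses Theorem \ref{lvl_conj} once more. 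Because $b \geq t$, the element $y$ belongs to this level subgroup, and hence so does $y^{-1}$. Combined with the identity $y^{-1}\eta_t y = x\eta_t x^{-1}$, this exhibits $y^{-1}$ as an element of $\langle \eta_t, x\eta_t x^{-1} \rangle$ conjugating $\eta_t$ onto $x\eta_t x^{-1}$, which is exactly the classical pronormality of $\eta_t$ in $\mu_t$ at the element $x$.

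I expect the main obstacle to be this third step: the $L$-membership $b_y \in \langle \eta, \eta^{t_{x^{-1}}} \rangle$ does not automatically descend to a classical membership $y \in \langle \eta_t, x\eta_t x^{-1} \rangle$, and without some sup-property-type hypothesis on $L$, taking level subsets need not commute with $L$-generation. The upper well ordered assumption on $L$ is exactly what supplies this commutation through Theorem \ref{gen_sup}; the rest of the argument is bookkeeping with tips and with the convention that $\eta_t^g = g\eta_t g^{-1}$ dictated by Theorem \ref{lvl_conj}.
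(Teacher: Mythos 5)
Your argument is correct and follows the same overall strategy as the paper's proof: fix $t\leq\eta(e)$ and $x\in\mu_t$, apply $L$-pronormality to the $L$-point $t_{x^{-1}}$, compare tips to get $b\geq t$, and then transfer both the membership and the conjugacy statement down to level $t$. Where you differ is in the two technical transfers. The paper carries out step three by hand: it invokes Theorem \ref{gen_form}, uses the upper well ordered hypothesis once to locate a $c_0\geq t$ in the set $\{c\mid x\in\langle(\eta\cup\eta^{t_{g^{-1}}})_c\rangle\}$ that realizes the supremum, descends to level $t$, writes the element as a word, and uses the hypothesis a second time to split $\eta(x_i)\vee\eta^{t_{g^{-1}}}(x_i)\geq t$ into a disjunction. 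You instead observe that an upper well ordered $L$ forces every $L$-subset to have the sup-property, so Theorem \ref{gen_sup} gives the commutation $\langle\eta,\eta^{t_{x^{-1}}}\rangle_t=\langle(\eta\cup\eta^{t_{x^{-1}}})_t\rangle$ in one stroke; this is a genuinely cleaner packaging and isolates exactly why the lattice hypothesis is needed. Similarly, the paper proves $\eta_t^{x^{-1}}=\eta_t^{g}$ by direct element-chasing, whereas you read it off from Theorem \ref{lvl_conj}; both are fine. The one step you assert without justification is $(\eta\cup\eta^{t_{x^{-1}}})_t=\eta_t\cup(\eta^{t_{x^{-1}}})_t$: in a general lattice a level set of a union can strictly contain the union of the level sets, so you still need the observation that an upper well ordered lattice is a chain (the supremum of $\{\eta(z),\eta^{t_{x^{-1}}}(z)\}$ must be one of the two elements), which is precisely the paper's second use of the hypothesis. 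That is a one-line fix, not a gap in the strategy.
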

\begin{proof}
	Let $\eta$ be a pronormal $L$-subgroup of $\mu$ and let $t \leq \eta(e)$. To show that $\eta_t$ is a pronormal subgroup of $\mu_t$, let $g \in \mu_t$. Then, $t_{g^{-1}} \in \mu$. Hence there exists an $L$-point $a_x \in \langle \eta, \eta^{t_{g^{-1}}} \rangle$ such that $\eta^{a_x} = \eta^{t_{g^{-1}}}$. We claim that $x^{-1} \in \langle \eta_t, {\eta_t}^g \rangle$ and ${\eta_t}^{x^{-1}} = {\eta_t}^g$.
	
	\noindent Firstly, since $\eta^{a_x} = \eta^{t_{g^{-1}}}$, 
	\[ a \geq a \wedge \eta(e) = \text{tip}(\eta^{a_x}) = \text{tip}(\eta^{t_{g^{-1}}}) = t \wedge \eta(e) = t. \]
	Hence $a \geq t$. Now, we show that $x^{-1} \in \langle \eta_t, {\eta_t}^g \rangle$. Note that since $a_x \in \langle \eta, \eta^{t_{g^{-1}}} \rangle$,
	\[ \langle \eta, \eta^{t_{g^{-1}}} \rangle (x) \geq a \geq t. \]
	By Theorem \ref{gen_form},
	\begin{center} 
		$\langle \eta, \eta^{t_{g^{-1}}} \rangle(x) = \mathop{\vee}\limits_{c \leq \eta(e)}{\left\{c \mid x\in\left\langle (\eta \cup \eta^{t_{g^{-1}}})_c \right\rangle\right\}}.$ 
	\end{center}
	Let $A = \{ c \leq \eta(e) \mid x\in\left\langle (\eta \cup \eta^{t_{g^{-1}}})_c \right\rangle \}$. Then, $A$ is a non-empty subset of $L$. Since $L$ is upper well ordered, $A$ contains its supremum, say $c_0$. Thus $x \in \langle (\eta \cup \eta^{t_g^{-1}})_{c_0} \rangle$ and $c_0 \geq t$. This implies $(\eta \cup \eta^{t_{g^{-1}}})_{c_0} \subseteq (\eta \cup \eta^{t_{g^{-1}}})_t$ and hence $x \in \langle (\eta \cup \eta^{t_{g^{-1}}})_t \rangle$. Thus
	\[ x = x_1 x_2 \ldots x_k, \text{ where } {x_i} \text{ or } {x_i}^{-1} \in (\eta \cup \eta^{t_{g^{-1}}})_t, \]
	that is, $(\eta \cup \eta^{t_{g^{-1}}})(x_i) \geq t$. This implies 
	\[ \eta(x_i) \vee \eta^{t_{g^{-1}}}(x_i) \geq t. \]
	Again, since $L$ is upper well ordered, $\eta(x_i) \geq t$ or $\eta^{t_{g^{-1}}}(x_i) \geq t$. If $\eta(x_i) \geq t$, then $x_i \in \eta_t$. On the other hand, if $\eta^{t_{g^{-1}}}(x_i) \geq t$, then 
	\[ \eta(g^{-1} x_i g) \geq t \wedge \eta(g^{-1} x_i g) \geq t, \]
	that is, $x_i \in {\eta_t}^g$. Thus $x_i \in \eta_t \cup {\eta_t}^g$. Therefore
	\[ x= x_1 x_2 \ldots x_k, \text{ where } x_i \text{ or } {x_i}^{-1} \in \eta_t \cup {\eta_t}^g. \]
	This implies $x \in \langle \eta_t, {\eta_t}^g \rangle$. Since $\langle \eta_t, {\eta_t}^g \rangle$ is a subgroup of $G$, we conclude that $x^{-1} \in \langle \eta_t, {\eta_t}^g \rangle$.
	
	\noindent Finally, we show that ${\eta_t}^{x^{-1}} = {\eta_t}^g$. Let $z \in {\eta_t}^{x^{-1}}$ be arbitrary. Then, $xzx^{-1} \in \eta_t$, that is,	$\eta(xzx^{-1}) \geq t$. This implies 
	\[ a \wedge \eta(xzx^{-1}) \geq a \wedge t = t. \]
	Thus $\eta^{a_x}(z) \geq t$. Since $\eta^{a_x} = \eta^{t_{g^{-1}}}$, $\eta^{t_{g^{-1}}}(z) \geq t$. Hence 
	\[ t \wedge \eta(g^{-1}zg) \geq t. \]
	It follows that	$\eta(g^{-1}zg) \geq t.$ Therefore $g^{-1}zg \in \eta_t$, that is, $z \in {\eta_t}^g$. Thus ${\eta_t}^{x^{-1}} \subseteq {\eta_t}^g$.
	
	\noindent For the reverse inclusion, let $z \in {\eta_t}^g$. Then, $g^{-1}zg \in \eta_t$, that is, $\eta(g^{-1}zg) \geq t.$  Thus $t \wedge \eta(g^{-1}zg) \geq t$, or $\eta^{t_{g^{-1}}}(z) \geq t.$ Since $\eta^{t_{g^{-1}}} = \eta^{a_x}$, $\eta^{a_z}(x) \geq t$. Hence
	\[ a \wedge \eta(xzx^{-1}) \geq t. \]
	This implies $\eta(xzx^{-1}) \geq t$, that is, $xzx^{-1} \in \eta_t$.Thus $z \in {\eta_t}^{x^{-1}}$. Since $z$ is an arbitrary element of ${\eta_t}^g$, it follows that ${\eta_t}^g \subseteq {\eta_t}^{x^{-1}}$. We conclude that ${\eta_t}^{x^{-1}} = {\eta_t}^g$. This completes the proof.	
\end{proof}

\section{Pronormal $L$-Subgroups and Normality}

\noindent In this section, we explore the various relations of pronormal $L$-subgroups with the notions of normal $L$-subgroups, subnormal $L$-subgroups, normalizer of an $L$-subgroup of an $L$-group and maximal $L$-subgroup. The results discussed in this section parallel the interactions of these concepts in classical group theory. Hence this section highlights the strengths of the notion of pronormality developed in this paper.

In Theorem \ref{nor_prn}, we prove that a normal $L$-subgroup of an $L$-group $\mu$ is pronormal in $\mu$. Firstly, we recall the following result from \cite{jahan_conj}:

\begin{lemma}(\cite{jahan_conj})
	\label{nor_conj}
	Let $\eta \in L(\mu)$. Then, $\eta$ is a normal $L$-subgroup of $\mu$ if and only if  $\eta^{a_z} \subseteq \eta$ for every $L$-point $a_z \in \mu$. Moreover, if $\eta \in NL(\mu)$ and $\text{tip}(\eta^{a_z}) = \text{tip}(\eta)$, then $\eta^{a_z} = \eta$. 
\end{lemma}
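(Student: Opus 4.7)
The plan is to unpack both sides using only the definition of a normal $L$-subgroup, namely $\eta(yxy^{-1}) \geq \eta(x) \wedge \mu(y)$, and the definition of conjugate, namely $\eta^{a_z}(x) = a \wedge \eta(zxz^{-1})$ for $a_z \in \mu$ (so $\mu(z) \geq a$). No serious machinery is needed; the argument should be a direct chase through the definitions, and I don't anticipate any real obstacle.

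For the forward direction, I would fix an $L$-point $a_z \in \mu$ and an arbitrary $x \in G$, and estimate $\eta^{a_z}(x) = a \wedge \eta(zxz^{-1})$. Applying the normality inequality with the pair $(z^{-1}, zxz^{-1})$ in place of $(y,x)$, and using that $\mu(z^{-1}) = \mu(z) \geq a$ since $\mu$ is an $L$-subgroup, I would obtain
\[ \eta(x) = \eta\bigl(z^{-1}(zxz^{-1})z\bigr) \geq \eta(zxz^{-1}) \wedge \mu(z) \geq \eta(zxz^{-1}) \wedge a = \eta^{a_z}(x), \]
which gives $\eta^{a_z} \subseteq \eta$.

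For the converse, given $x, y \in G$, the trick is to choose the right $L$-point: I would set $a = \mu(y)$, so that $a_{y^{-1}} \in \mu$ (using $\mu(y^{-1}) = \mu(y)$). The hypothesis then gives $\eta^{a_{y^{-1}}} \subseteq \eta$, i.e.\ $a \wedge \eta(y^{-1} w y) \leq \eta(w)$ for every $w \in G$. Specializing to $w = yxy^{-1}$ yields
\[ \mu(y) \wedge \eta(x) = a \wedge \eta\bigl(y^{-1}(yxy^{-1})y\bigr) \leq \eta(yxy^{-1}), \]
which is exactly the normality condition.

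For the ``moreover'' clause, the tip hypothesis $a \wedge \eta(e) = \eta(e)$ forces $a \geq \eta(e) \geq \eta(w)$ for every $w \in G$, so the conjugate simplifies to $\eta^{a_z}(x) = \eta(zxz^{-1})$. Now the forward part of the lemma already gives $\eta^{a_z} \subseteq \eta$; for the reverse containment I would apply normality directly with $y = z$ to get $\eta(zxz^{-1}) \geq \eta(x) \wedge \mu(z) \geq \eta(x) \wedge a = \eta(x)$, the last equality again using $a \geq \eta(e) \geq \eta(x)$. Combining the two containments yields $\eta^{a_z} = \eta$.
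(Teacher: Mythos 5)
Your proof is correct: both directions and the ``moreover'' clause follow by exactly the kind of definitional chase you describe, and each inequality you invoke (in particular $\mu(z^{-1})=\mu(z)\ge a$ in the forward direction, the choice $a=\mu(y)$ with the substitution $w=yxy^{-1}$ in the converse, and $a\wedge\eta(e)=\eta(e)\Rightarrow a\ge\eta(e)\ge\eta(x)$ for the final equality) is valid. Note that the paper itself gives no proof of this lemma --- it is recalled from the reference \cite{jahan_conj} --- but your argument is the standard one and there is nothing to add or correct.
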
	

\begin{theorem}\label{nor_prn}
	Let $\eta$ be a normal $L$-subgroup of $\mu$. Then, $\eta$ is a pronormal $L$-subgroup of $\mu$.
\end{theorem}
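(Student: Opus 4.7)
The plan is to exploit the fact that normality of $\eta$ forces every conjugate $\eta^{a_x}$ to sit inside $\eta$ itself, so the ambient generated $L$-subgroup $\langle \eta, \eta^{a_x}\rangle$ collapses to $\eta$, and the only real work is to produce a conjugating $L$-point already lying in $\eta$.

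First, I would fix an arbitrary $L$-point $a_x \in \mu$. Invoking Lemma \ref{nor_conj}, $\eta^{a_x} \subseteq \eta$, so $\eta \cup \eta^{a_x} = \eta$ and hence $\langle \eta, \eta^{a_x}\rangle = \eta$. Thus the task reduces to finding an $L$-point $b_y \in \eta$ (no generation required) with $\eta^{b_y} = \eta^{a_x}$.

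My candidate is the $L$-point at the identity $b_e$ with $b = a \wedge \mathrm{tip}(\eta)$. Membership $b_e \in \eta$ is immediate from $\eta(e) = \mathrm{tip}(\eta) \geq b$. To check the equality $\eta^{b_e} = \eta^{a_x}$, observe that for every $z \in G$,
\[
\eta^{b_e}(z) = b \wedge \eta(z) = a \wedge \mathrm{tip}(\eta) \wedge \eta(z) = a \wedge \eta(z),
\]
using $\eta(z) \leq \mathrm{tip}(\eta)$. On the other side, $\eta^{a_x}(z) = a \wedge \eta(xzx^{-1})$. The identification of these two values is the only real step: from normality of $\eta$ in $\mu$ and $\mu(x) \geq a$,
\[
\eta(xzx^{-1}) \geq \eta(z) \wedge \mu(x) \geq \eta(z) \wedge a,
\]
and symmetrically $\eta(z) = \eta(x^{-1}(xzx^{-1})x) \geq \eta(xzx^{-1}) \wedge a$. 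Meeting both inequalities with $a$ yields $a \wedge \eta(xzx^{-1}) = a \wedge \eta(z)$, completing the verification.

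No step looks like a genuine obstacle: the collapse of the generated $L$-subgroup is immediate from Lemma \ref{nor_conj}, and the equality $\eta^{b_e} = \eta^{a_x}$ is a direct application of the normality inequality in both directions. The only subtlety worth flagging in the write-up is that $a$ need not satisfy $a \leq \mathrm{tip}(\eta)$, which is precisely why the candidate is $b = a \wedge \mathrm{tip}(\eta)$ rather than $a$ itself; this keeps $b_e$ inside $\eta$ while preserving the equality above, since $\eta(z) \leq \mathrm{tip}(\eta)$ makes the extra meet harmless.
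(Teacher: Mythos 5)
Your proposal is correct and follows essentially the same route as the paper: both invoke Lemma \ref{nor_conj} to collapse $\langle \eta, \eta^{a_x}\rangle$ to $\eta$, choose the same witness $b_e$ with $b = a \wedge \eta(e)$, and verify $\eta^{b_e} = \eta^{a_x}$ by applying the normality inequality in both directions. Your reduction to the single identity $a \wedge \eta(xzx^{-1}) = a \wedge \eta(z)$ is a slightly tidier packaging of the paper's two-inclusion computation, but the argument is the same.
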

\begin{proof}
	Let $\eta$ be a normal $L$-subgroup of $\mu$ and let $a_x \in \mu$. Note that by Lemma \ref{nor_conj},	$\eta^{a_x} \subseteq \eta$. Thus 
	\[ \langle \eta, \eta^{a_x} \rangle = \eta.\]
	Take $b=a \wedge \eta(e)$. Then, $b_e \in \eta=\langle \eta, \eta^{a_x} \rangle$. We claim that	$\eta^{b_e} = \eta^{a_x}.$
	
	\noindent Let $g \in G$. Then, 
	\begin{equation*}
		\begin{split}
			\eta^{a_x}(g) &= a \wedge \eta(xgx^{-1}) \\
			&\geq a\wedge \eta(g) \wedge \mu(x) \qquad~~~ (\text{since $\eta \in NL(\mu)$})\\
			&= a \wedge \eta(g) \qquad\qquad\qquad~~~ (\text{since $\mu(x)\geq a$})\\
			&= a \wedge \eta(e) \wedge \eta(g) \qquad~~~ (\text{since $\eta(e) \geq \eta(g)$})\\
			&= b \wedge \eta(ege^{-1})\\
			&= \eta^{b_e}(g).
		\end{split}
	\end{equation*}
	Hence $\eta^{b_e} \subseteq \eta^{a_x}$. For the reverse inclusion,
	\begin{equation*}
		\begin{split}
			\eta^{b_e}(g) &= b \wedge \eta(ege^{-1})\\ 
			&= \{a \wedge \eta(e)\} \wedge \eta(g)\\
			&= a \wedge \eta(e) \wedge \eta(x^{-1}(xgx^{-1})x)\\
			& \geq a \wedge \eta(e) \wedge \eta(xgx^{-1}) \wedge \mu(x)\\ 
			&\qquad\qquad\qquad\qquad (\text{since $\eta$ is normal in $\mu$})\\
			&= a \wedge \eta(gxg^{-1})\\ 
			&\qquad~ (\text{since $\mu(x) \geq a$ and $\eta(e) \geq \eta(xgx^{-1})$})\\
			&= \eta^{a_x}(g).
		\end{split}
	\end{equation*}
	We conclude that $\eta^{a_x} = \eta^{b_e}$ and hence $\eta$ is a pronormal $L$-subgroup of $\mu$.
\end{proof}

\begin{example}
	Consider the $L$-subgroup $\eta$ of the $L$-group $\mu$ discussed in Example \ref{example1}. We have already shown that $\eta$ is a pronormal $L$-subgroup of $\mu$. Here, note that for $t=a$, $\eta_a = D_4^1$, which is not a normal subgroup of $\mu_a = S_4$. Hence by Thorem \ref{lev_norsgp}, $\eta \notin NL(\mu)$.
\end{example}

\noindent The normalizer of an $L$-subgroup has been explored in detail by Ajmal and Jahan in \cite{ajmal_nor}. Therein, they have defined the normalizer using the notion of cosets of $L$-subgroups. The normalizer thus developed has been shown to be immensely compatible with the notion of normality in $L$-group theory.

In Theorem \ref{prn_norm}, we discuss the pronormality of the normalizer of a pronormal $L$-subgroup of an $L$-group. Firstly, we recall the definitions of cosets and normalizer from \cite{ajmal_nor}:  

\begin{definition}(\cite{ajmal_nor})
	Let $\eta \in L(\mu)$ and let $a_x$ be an $L$-point of $\mu$. The left (respectively, right) coset of $\eta$ in $\mu$ with respect to $a_x$ is defined as the set product $a_x \circ \eta$ ($\eta \circ a_x$).
\end{definition}

\noindent From the definition of set product of two $L$-subsets, it can be easily seen that for all $z \in G$, 
\[ (a_x \circ \eta)(z) = a \wedge \eta(x^{-1}z) \] 
and
\[ (\eta \circ a_x)(z) = a \wedge \eta(zx^{-1}). \]

\begin{definition}(\cite{ajmal_nor})
	\label{defn_norm1}
	Let $\eta \in L(\mu)$. The normalizer of $\eta$ in $\mu$, denoted by $N(\eta$), is the $L$-subgroup defined as follows:
	\[ N(\eta) = \bigcup \left\{ a_x \in \mu \mid a_x \circ \eta = \eta \circ a_x \right\}. \]
	$N(\eta)$ is the largest $L$-subgroup of $\mu$ such that $\eta$ is a normal $L$-subgroup of $N(\eta)$. Also, it has been established in \cite {ajmal_nor} that  $\eta$ is a normal $L$-subgroup of $\mu$ if and only if $N(\eta)=\mu$.
\end{definition}

\noindent In \cite{jahan_conj}, the authors have provided a new definition for the normalizer of an $L$-subgroup using the notion of the conjugate. We recall this definition as a theorem below: 

\begin{theorem}[\cite{jahan_conj}]\label{def_norm}
	Let $\eta \in L(\mu)$. The normalizer of $\eta$ in $\mu$, denoted by $N(\eta$), is the $L$-subgroup defined as follows:
	\[ N(\eta) = \bigcup \left\{ a_z \in \mu \mid  \eta^{a_z} \subseteq \eta \right\}. \]
\end{theorem}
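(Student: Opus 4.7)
The plan is to establish the claimed equality of $L$-subsets by comparing the two membership conditions on each $L$-point $a_z$ of $\mu$. First I unpack the coset condition of Definition \ref{defn_norm1}: direct computation gives $(a_z \circ \eta)(y) = a \wedge \eta(z^{-1}y)$ and $(\eta \circ a_z)(y) = a \wedge \eta(yz^{-1})$, so after the substitution $y = zx$ the equation $a_z \circ \eta = \eta \circ a_z$ becomes the pointwise identity $a \wedge \eta(x) = a \wedge \eta(zxz^{-1})$ for all $x \in G$. On the other hand, $\eta^{a_z} \subseteq \eta$ unpacks to $a \wedge \eta(zxz^{-1}) \leq \eta(x)$, equivalently $a \wedge \eta(zxz^{-1}) \leq a \wedge \eta(x)$, for all $x \in G$.

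The forward inclusion, that $N(\eta)$ as in Definition \ref{defn_norm1} is contained in $\bigcup \{a_z \in \mu \mid \eta^{a_z} \subseteq \eta\}$, is then immediate: if $a_z \circ \eta = \eta \circ a_z$, then $\eta^{a_z}(x) = a \wedge \eta(zxz^{-1}) = a \wedge \eta(x) \leq \eta(x)$, hence $\eta^{a_z} \subseteq \eta$. Every $L$-point contributing to the original union thus contributes to the new one as well.

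For the reverse inclusion, given $\eta^{a_z} \subseteq \eta$, I need to promote the inequality $a \wedge \eta(zxz^{-1}) \leq a \wedge \eta(x)$ to an equality. My plan is to pass to level subgroups via Theorem \ref{lvl_conj}, which translates the hypothesis into the level-set inclusion $\eta_t^{z^{-1}} \subseteq \eta_t$ for every $t \leq a$. Exploiting that $\mu(z^{-1}) = \mu(z) \geq a$ so that $a_{z^{-1}}$ also lies in $\mu$, together with the $L$-subgroup structure of $\eta$, one aims to upgrade this one-sided inclusion to the level-set equality $\eta_t^{z^{-1}} = \eta_t$, which via Theorem \ref{lvl_conj} reassembles into the missing inequality $a \wedge \eta(x) \leq a \wedge \eta(zxz^{-1})$, so $a_z \in N(\eta)$ under Definition \ref{defn_norm1}.

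The main obstacle I anticipate is precisely this upgrade from one-sided to two-sided level-set containment: classically, $z^{-1}Hz \subseteq H$ does not by itself force $z^{-1}Hz = H$, so the argument must use the $L$-subgroup hypotheses essentially. I would pursue this via Lemma \ref{nor_conj} applied inside the $L$-subgroup $\langle \eta, a_z \rangle$, or alternatively by verifying directly that the right-hand union is itself an $L$-subgroup of $\mu$ containing $\eta$ normally and then appealing to the maximality property that characterizes $N(\eta)$ under Definition \ref{defn_norm1}.
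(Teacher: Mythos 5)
First, note that the paper itself offers no proof of this statement: it is recalled verbatim from \cite{jahan_conj}, so there is no in-paper argument to compare yours against, and your proposal has to stand on its own. Your forward direction does: the coset identity $a \wedge \eta(x) = a \wedge \eta(zxz^{-1})$ immediately yields $\eta^{a_z} \subseteq \eta$, so every $L$-point admitted by Definition \ref{defn_norm1} is admitted by the new description, and one containment of the unions follows.

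The reverse inclusion, however, is left as a plan rather than a proof, and the obstacle you yourself flag is fatal to both of your proposed routes. The condition $\eta^{a_z} \subseteq \eta$ is genuinely one-sided: at level $t \leq a$ it gives only ${\eta_t}^{z^{-1}} \subseteq \eta_t$ (and Theorem \ref{lvl_conj} does not apply here anyway, since it characterizes equalities $\nu = \eta^{a_z}$, not containments in $\eta$). Knowing that $a_{z^{-1}}$ is also an $L$-point of $\mu$ tells you nothing about $\eta^{a_{z^{-1}}}$, which is exactly the missing half; Lemma \ref{nor_conj} is of no help because it quantifies over \emph{all} $L$-points of $\mu$, whereas here you hold a single $a_z$ fixed. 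Your fallback --- showing the right-hand union is an $L$-subgroup in which $\eta$ is normal and invoking maximality of $N(\eta)$ --- stalls at the same place, since closure of the family $\{a_z \mid \eta^{a_z} \subseteq \eta\}$ under inversion of $L$-points is precisely the two-sidedness you lack. Indeed, no formal argument can close this gap without extra hypotheses: take $G = GL_2(\mathbb{Q})$, $H$ the integer unipotent upper-triangular subgroup, $g = \mathrm{diag}(2,1)$, $\mu = 1_G$, $\eta = 1_H$. Then $\eta^{1_{g^{-1}}} = 1_{gHg^{-1}} \subseteq 1_H$ because $gHg^{-1} \subsetneq H$, yet $g^{-1}$ does not normalize $H$, so $1_{g^{-1}}$ belongs to the right-hand family while $N(\eta)(g^{-1}) = 0$ under Definition \ref{defn_norm1}. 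The equivalence therefore requires some finiteness or chain condition (present, implicitly or explicitly, in the source \cite{jahan_conj}), and any correct proof must identify and use it; your outline does not.
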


\begin{theorem}\label{prn_norm}
	Let $\eta$ be a pronormal $L$-subgroup of $\mu$ satisfying $\text{tip}(\eta)=\text{tip}(\mu)$. Let $N(\eta)$ denote the normalizer of $\eta$ in $\mu$. Then, $N(\eta)$ is a pronormal $L$-subgroup of $\mu$.
\end{theorem}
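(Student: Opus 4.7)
The plan is to fix an arbitrary $L$-point $a_x \in \mu$ and use pronormality of $\eta$ to produce an $L$-point that serves as the required witness for $N(\eta)$ as well. Pronormality of $\eta$ yields $b_y \in \langle \eta, \eta^{a_x} \rangle$ with $\eta^{b_y} = \eta^{a_x}$. A direct inspection of Theorem \ref{def_norm} gives $\eta \subseteq N(\eta)$ (for any $L$-point $c_z \in \eta$ one has $\eta^{c_z} \subseteq \eta$ by the $L$-subgroup property of $\eta$). Consequently $\eta^{a_x} \subseteq N(\eta)^{a_x}$, so $\langle \eta, \eta^{a_x} \rangle \subseteq \langle N(\eta), N(\eta)^{a_x} \rangle$, and this same $b_y$ already lives in $\langle N(\eta), N(\eta)^{a_x} \rangle$.

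Next I would compare tips. The equality $\eta^{b_y} = \eta^{a_x}$ forces $b \wedge \eta(e) = a \wedge \eta(e)$; since the hypothesis $\text{tip}(\eta) = \text{tip}(\mu)$ together with $a \leq \mu(x) \leq \mu(e)$ and $b \leq \mu(y) \leq \mu(e)$ places both $a$ and $b$ below $\eta(e)$, this collapses to $a = b$. Thus it will suffice to prove $N(\eta)^{a_y} = N(\eta)^{a_x}$.

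The heart of the argument, and the step I expect to be the main obstacle, is exhibiting $u := yx^{-1}$ as an $L$-point of $N(\eta)$ at level $a$. Reading the equality $\eta^{a_x} = \eta^{a_y}$ pointwise gives $a \wedge \eta(xkx^{-1}) = a \wedge \eta(yky^{-1})$ for every $k \in G$, and the substitution $k = x^{-1}hx$ transforms this into $a \wedge \eta(h) = a \wedge \eta(uhu^{-1})$ for all $h$, which is exactly $\eta^{a_u} \subseteq \eta$. Combined with $\mu(u) \geq \mu(y) \wedge \mu(x) \geq a$, Theorem \ref{def_norm} gives $a_u \in N(\eta)$, so $N(\eta)(u) \geq a$ and, by the $L$-subgroup property, $N(\eta)(u^{-1}) \geq a$ as well. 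Finally, I would use the identity $ygy^{-1} = u\,(xgx^{-1})\,u^{-1}$ together with the $L$-subgroup property of $N(\eta)$ to obtain $N(\eta)(ygy^{-1}) \geq N(\eta)(u) \wedge N(\eta)(xgx^{-1}) \wedge N(\eta)(u^{-1}) \geq a \wedge N(\eta)(xgx^{-1})$; meeting with $a$ on both sides gives one inequality, and the symmetric identity $xgx^{-1} = u^{-1}(ygy^{-1})u$ delivers the reverse. This yields $N(\eta)^{a_x} = N(\eta)^{a_y} = N(\eta)^{b_y}$, completing the proof that $N(\eta)$ is pronormal in $\mu$.
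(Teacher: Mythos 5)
Your proof is correct, and its core coincides with the paper's: both arguments take the witness $b_y$ supplied by pronormality of $\eta$ and exploit the pointwise consequence of $\eta^{b_y}=\eta^{a_x}$ to show that the transition element between $x$ and $y$ normalizes $\eta$ at the relevant level, so that it is an $L$-point of $N(\eta)$ by Theorem \ref{def_norm}. The difference is in how this fact is then used. The paper keeps the level $a\wedge b$ throughout, forms the new $L$-point $(a\wedge b)_x=(a\wedge b)_{xy^{-1}}\circ b_y$ via the set product so that the witness sits over the original base point $x$, and then verifies $N(\eta)^{(a\wedge b)_x}=N(\eta)^{a_x}$ by a pure level comparison using $\mathrm{tip}(\eta)=\mathrm{tip}(\mu)$. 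You instead use the tip hypothesis at the outset to collapse $a\wedge\eta(e)=b\wedge\eta(e)$ to $a=b$ (a valid and tidy observation, since $a\leq\mu(e)=\eta(e)$ and $b\leq\mu(e)=\eta(e)$), keep the witness $b_y=a_y$ itself --- whose membership in $\langle N(\eta),N(\eta)^{a_x}\rangle$ is immediate from $\langle\eta,\eta^{a_x}\rangle\subseteq\langle N(\eta),N(\eta)^{a_x}\rangle$ --- and transfer the remaining work to proving $N(\eta)^{a_y}=N(\eta)^{a_x}$ by conjugating with $u=yx^{-1}\in N(\eta)_a$. Your route trades the paper's set-product manipulation for a short two-sided conjugation estimate; both uses of the hypothesis $\mathrm{tip}(\eta)=\mathrm{tip}(\mu)$ are essential and appear at different points. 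All the individual steps you flag check out: $\eta\subseteq N(\eta)$, $\mu(u)\geq a$, the substitution $k=x^{-1}hx$ giving $\eta^{a_u}\subseteq\eta$, and the identity $ygy^{-1}=u(xgx^{-1})u^{-1}$.
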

\begin{proof}
	Let $\nu = N(\eta)$. Let $a_x$ be an $L$-point of $\mu$. Then, since $\eta$ is a pronormal $L$-subgroup of $\mu$, there exists $b_y \in \langle \eta, \eta^{a_x} \rangle$ such that $\eta^{b_y} = \eta^{a_x}$. We claim that $(a \wedge b)_x \in \langle \nu, \nu^{a_x} \rangle$ and $\nu^{(a \wedge b)_x} = \nu^{a_x}$.
	
	\noindent Firstly, note that for all $g \in G$,
	\begin{equation*}
		\begin{split}
			\eta^{(a \wedge b)_{xy^{-1}}}(g) &= (a \wedge b) \wedge \eta((xy^{-1})g(xy^{-1})^{-1}) \\
			&= b \wedge (a \wedge \eta(x(y^{-1}gy)x^{-1})) \\
			&= b \wedge \eta^{a_x}(y^{-1}gy)\\
			&= b \wedge \eta^{b_y}(y^{-1}gy) \\ 
			&\qquad\qquad\qquad\qquad (\text{since } \eta^{a_x} = \eta^{b_y})\\
			&= b \wedge \eta(g)\\
			&\leq \eta(g).
		\end{split}
	\end{equation*} 
	Thus $\eta^{(a \wedge b)_{xy^{-1}}} \subseteq \eta$. By Theorem \ref{def_norm}, $(a \wedge b)_{xy^{-1}} \in N(\eta) = \nu$. Thus $(a \wedge b)_{xy^{-1}} \in \langle \nu, \nu^{a_x} \rangle$. Also, $b_y \in \langle \eta, \eta^{a_x} \rangle \subseteq \langle \nu, \nu^{a_x} \rangle$. Therefore
	\[ (a \wedge b)_x = (a \wedge b)_{xy^{-1}} \circ b_y \subseteq \langle \nu, \nu^{a_x} \rangle. \]
	Now, we show that $\nu^{(a \wedge b)_x} = \nu^{a_x}$. Firstly, since $\text{tip}(\eta) = \text{tip}(\mu)$ and $\eta \subseteq N(\eta) \subseteq \mu$, we must have $\text{tip}(N(\eta)) = \text{tip}(\eta)$. Moreover, since $\eta^{a_x} = \eta^{b_y}$, $\text{tip}(\eta^{a_x}) = \text{tip}(\eta^{b_y})$, that is, $a \wedge \eta(e) = b \wedge \eta(e)$. Thus $a \wedge \nu(e) = b \wedge \nu(e)$. Hence for all $g \in G$,
	\begin{equation*}
		\begin{split}
			\nu^{(a \wedge b)_x}(g) &= (a \wedge b) \wedge \nu(gxg^{-1}) \\
			&= (a \wedge b) \wedge (\nu(e) \wedge \nu(xgx^{-1})) \\
			&\qquad\qquad\qquad (\text{since } \nu(e) \geq \nu(gxg^{-1}))\\
			&= a \wedge (b \wedge \nu(e)) \wedge \nu(xgx^{-1}) \\
			&= a \wedge (a \wedge \nu(e)) \wedge \nu(xgx^{-1}) \\
			&= a \wedge (\nu(e) \wedge \nu(xgx^{-1})) \\
			&= a \wedge \nu(xgx^{-1}) \\
			&= \nu^{a_x}(g).
		\end{split}
	\end{equation*}
	Therefore $\nu^{(a \wedge b)_x} = \nu^{a_x}$ and we conclude that $\nu = N(\eta)$ is a pronormal $L$-subgroup of $\mu$.
\end{proof}

\noindent In Theorem \ref{prn_setp}, we show that the set product of a normal $L$-subgroup and a pronormal $L$-subgroup of $\mu$ is a pronormal $L$-subgroup of $\mu$. For this, we recall the following from \cite{jahan_conj}:

\begin{lemma}(\cite{jahan_conj})
	\label{conj_prod}
	Let $\eta, \nu \in L(\mu)$ and $a_z$ be an $L$-point of $\mu$. Then, 
	\[ (\eta \circ \nu)^{a_z} = \eta^{a_z} \circ \nu^{a_z}. \]
\end{lemma}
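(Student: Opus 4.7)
The plan is to unfold both sides of the claimed identity using the definitions of set product and of conjugate, and then to transport the join via the bijection of $G \times G$ induced by conjugation by $z$. The ambient tool is complete distributivity of $L$, which lets me slide a constant meet past an arbitrary supremum.

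Fix $x \in G$. On the left, the definitions give
\begin{align*}
(\eta \circ \nu)^{a_z}(x) &= a \wedge (\eta \circ \nu)(zxz^{-1}) \\
 &= a \wedge \bigvee_{zxz^{-1} = yw}\{\eta(y) \wedge \nu(w)\} \\
 &= \bigvee_{zxz^{-1} = yw}\{a \wedge \eta(y) \wedge \nu(w)\},
\end{align*}
the last step being complete distributivity. On the right, using $a \wedge a = a$,
\begin{align*}
(\eta^{a_z} \circ \nu^{a_z})(x) &= \bigvee_{x = pq}\{\eta^{a_z}(p) \wedge \nu^{a_z}(q)\} \\
 &= \bigvee_{x = pq}\{a \wedge \eta(zpz^{-1}) \wedge \nu(zqz^{-1})\}.
\end{align*}

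To match these two joins I would invoke the bijection $(p, q) \mapsto (zpz^{-1}, zqz^{-1})$ from $\{(p, q) : pq = x\}$ onto $\{(y, w) : yw = zxz^{-1}\}$, whose inverse is $(y, w) \mapsto (z^{-1}yz, z^{-1}wz)$; that this is indeed a bijection between the two indexing sets follows from $(zpz^{-1})(zqz^{-1}) = z(pq)z^{-1}$. Since the summand indexed by $(p, q)$ on the right coincides with the summand indexed by $(y, w) = (zpz^{-1}, zqz^{-1})$ on the left, the two joins agree, and the pointwise equality $(\eta \circ \nu)^{a_z}(x) = (\eta^{a_z} \circ \nu^{a_z})(x)$ is established for every $x \in G$.

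I do not anticipate a genuine obstacle; the whole argument is a tidy unwinding of the definitions. The single step worth calling out is the application of complete distributivity to pull the constant $a$ inside the supremum, since this is where the hypothesis that $L$ is completely distributive (not merely complete) is being used. Beyond that, the conjugation bijection on factorizations does all the work.
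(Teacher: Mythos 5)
Your proposal is correct: the pointwise unwinding of both definitions, the use of idempotence to merge the two copies of $a$, the bijection $(p,q)\mapsto(zpz^{-1},zqz^{-1})$ between factorizations of $x$ and of $zxz^{-1}$, and the distribution of the constant meet over the supremum together give a complete proof. The paper itself states this lemma without proof, importing it from the reference \cite{jahan_conj}, so there is no in-paper argument to compare against; your computation is the standard one, and your remark correctly identifies the only place where more than completeness of $L$ is used (in fact only the infinite meet-distributive law $a\wedge\bigvee_i b_i=\bigvee_i(a\wedge b_i)$ is needed, which complete distributivity supplies).
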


\begin{theorem} \label{prn_setp}
	Let $\eta$ be a normal $L$-subgroup of $\mu$ and $\nu$ be a pronormal $L$-subgroup of $\mu$ such that $\text{tip}(\eta) = \text{tip}(\nu)$. Then, $\eta \circ \nu$ is a pronormal $L$-subgroup of $\mu$.
\end{theorem}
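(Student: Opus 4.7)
The plan is to transport pronormality of $\nu$ to $\eta\circ\nu$ by reusing the same $L$-point that witnesses pronormality of $\nu$. First, I would record that $\eta\circ\nu$ is an $L$-subgroup of $\mu$: the normality $\eta\triangleleft\mu$ yields $\eta\circ\nu=\nu\circ\eta\in L(\mu)$ (a standard fact in $L$-group theory), and $\text{tip}(\eta)=\text{tip}(\nu)$ gives the containments $\eta,\nu\subseteq\eta\circ\nu$. Now fix an $L$-point $a_x\in\mu$ and, by pronormality of $\nu$, pick $b_y\in\langle\nu,\nu^{a_x}\rangle$ with $\nu^{b_y}=\nu^{a_x}$. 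I claim that this same $b_y$ witnesses pronormality of $\eta\circ\nu$ at $a_x$.

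Membership is immediate: from $\nu\subseteq\eta\circ\nu$ and the monotonicity of conjugation, $\nu^{a_x}\subseteq(\eta\circ\nu)^{a_x}$, whence $\langle\nu,\nu^{a_x}\rangle\subseteq\langle\eta\circ\nu,(\eta\circ\nu)^{a_x}\rangle$ and $b_y$ lies in the latter.

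The equality $(\eta\circ\nu)^{b_y}=(\eta\circ\nu)^{a_x}$ rests on the main observation of the argument: for any normal $\eta$ and any $L$-point $a_x\in\mu$,
\[ \eta^{a_x}(z)=a\wedge\eta(z)\qquad\text{for all }z\in G. \]
This follows from the normality inequality $\eta(xzx^{-1})\ge\eta(z)\wedge\mu(x)$ together with $\mu(x)\ge a$, and the symmetric bound obtained by writing $z=x^{-1}(xzx^{-1})x$. Hence $\eta^{a_x}=a\wedge\eta$ and $\eta^{b_y}=b\wedge\eta$ as $L$-subsets. Since $\nu^{a_x}(v)\le a$ and $\nu^{b_y}(v)\le b$ pointwise, these scalar caps are absorbed inside the convolutions, and Lemma \ref{conj_prod} yields
\[ (\eta\circ\nu)^{a_x}=\eta^{a_x}\circ\nu^{a_x}=\eta\circ\nu^{a_x}=\eta\circ\nu^{b_y}=\eta^{b_y}\circ\nu^{b_y}=(\eta\circ\nu)^{b_y}, \]
the middle equality using $\nu^{a_x}=\nu^{b_y}$. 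Pronormality of $\eta\circ\nu$ follows.

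The only real obstacle is isolating the capping identity $\eta^{a_x}=a\wedge\eta$ for normal $\eta$. Once that is in hand, the scalars $a$ and $b$ disappear into the pointwise bounds on $\nu^{a_x}$ and $\nu^{b_y}$, and the remainder is bookkeeping driven entirely by Lemma \ref{conj_prod} and the pronormality of $\nu$.
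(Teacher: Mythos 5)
Your proof is correct, and it takes a genuinely different and leaner route than the paper's. The paper builds its witness in two stages: it first invokes Theorem \ref{nor_prn} to produce the point $b_e$ (with $b=a\wedge\eta(e)$) satisfying $\eta^{b_e}=\eta^{a_x}$, then applies pronormality of $\nu$ at the \emph{shifted} point $(a\wedge b)_x$ to obtain $c_y$, and finally takes $(b\wedge c)_y=b_e\circ c_y$ as the witness; the equality of conjugates is then routed through the normality of $\eta^{a_x}$ in $\mu$ together with Lemma \ref{nor_conj}. You instead isolate the capping identity $\eta^{a_x}=a\wedge\eta$ for normal $\eta$ and $a_x\in\mu$ --- which is exactly what the computation inside the proof of Theorem \ref{nor_prn} establishes, though the paper never states it in this form --- and observe that the cap $a$ is absorbed by the pointwise bound $\nu^{a_x}\le a$ in the set product, so that $(\eta\circ\nu)^{a_x}=\eta^{a_x}\circ\nu^{a_x}=\eta\circ\nu^{a_x}$ depends on $a_x$ only through $\nu^{a_x}$. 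This lets the witness $b_y$ for $\nu$ at $a_x$ itself serve unchanged, avoiding both the shifted point and the auxiliary argument that $\eta^{a_x}\in NL(\mu)$. The individual steps all check out: $b_y\in\langle\nu,\nu^{a_x}\rangle\subseteq\mu$, so $\eta^{b_y}$ and $(\eta\circ\nu)^{b_y}$ are defined and Lemma \ref{conj_prod} applies; the capping identity follows from the two normality inequalities $\eta(xzx^{-1})\ge\eta(z)\wedge\mu(x)$ and $\eta(z)\ge\eta(xzx^{-1})\wedge\mu(x)$ without any distributivity; and the membership step needs only $\nu\subseteq\eta\circ\nu$ plus monotonicity of conjugation. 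As a small bonus, your argument visibly uses only $\mathrm{tip}(\eta)\ge\mathrm{tip}(\nu)$ rather than equality of tips.
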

\begin{proof}
	Since $\eta$ is a normal $L$-subgroup of $\mu$, $\eta \circ \nu$ is an $L$-subgroup of $\mu$. To show that $\eta \circ \nu$ is a pronormal $L$-subgroup of $\mu$, let $a_x \in \mu$. Then, since $\eta$ is a normal $L$-subgroup of $\mu$, by Theorem \ref{nor_prn}, $\eta$ is a pronormal $L$-subgroup of $\mu$. In particular, the $L$-point $b_e$ of $\mu$, where $b = a \wedge \eta(e)$, satisfies $b_e \in \langle \eta, \eta^{a_x} \rangle$ and $\eta^{b_e} = \eta^{a_x}$. Now, clearly $(a \wedge b)_x \in \mu$. Hence by pronormality of $\nu$ in $\mu$, there exists an $L$-point $c_y$ such that $c_y \in \langle \nu, \nu^{(a \wedge b)_x} \rangle$ and $\nu^{c_y} = \nu^{(a \wedge b)_x}$. Consider the $L$-point $(b \wedge c)_y \in \mu$. We claim that $(b \wedge c)_y \in \langle \eta \circ \nu, (\eta \circ \nu)^{a_x} \rangle$ and $(\eta \circ \nu)^{(b \wedge c)_y} = (\eta \circ \nu)^{a_x}$.
	
	\noindent Firstly, we show that $(b \wedge c)_y \in \langle \eta \circ \nu, (\eta \circ \nu)^{a_x} \rangle$. Note that since $\eta \subseteq \eta \circ \nu$ and $\eta^{a_x} \subseteq \eta^{a_x} \circ \nu^{a_x} = (\eta \circ \nu)^{a_x}$,
	\[ b_e \in \langle \eta \circ \nu, (\eta \circ \nu)^{a_x} \rangle. \]
	Now, $\nu^{a_x} \subseteq (\eta \circ \nu)^{a_x} \subseteq \langle \eta \circ \nu , (\eta \circ \nu)^{a_x} \rangle$. Hence
	\begin{equation*}
	\begin{split}
		\nu^{(a \wedge b)_x} = (\nu^{a_x})^{b_e} &\subseteq (\langle \eta \circ \nu, (\eta \circ \nu)^{a_x} \rangle)^{b_e}\\ 
		&\subseteq \langle \eta \circ \nu, (\eta \circ \nu)^{a_x} \rangle.
	\end{split}
	\end{equation*} 
	By assumption, $c_y \in \langle \nu, \nu^{(a \wedge b)_x} \rangle$. Moreover, since $\nu \subseteq \langle \eta \circ \nu, (\eta \circ \nu)^{a_x} \rangle$ and $\nu^{(a \wedge b)_x} \subseteq \langle \eta \circ \nu, (\eta \circ \nu)^{a_x} \rangle$, 
	\[ \langle \nu, \nu^{(a \wedge b)_x} \rangle \subseteq \langle \eta \circ \nu, (\eta \circ \nu)^{a_x} \rangle. \]
	Hence $c_y \in \langle \eta \circ \nu, (\eta \circ \nu)^{a_x} \rangle$. Therefore
	\[ (b \wedge c)_y = b_e \circ c_y \subseteq \langle \eta \circ \nu, (\eta \circ \nu)^{a_x} \rangle. \]	
	Next, we show that $(\eta \circ \nu)^{(b \wedge c)_y} = (\eta \circ \nu)^{a_x}$. Here, note that 
	\[ \eta^{(b \wedge c)_y} = (\eta^{b_e})^{c_y} = (\eta^{a_x})^{c_y}. \]
	Now, $\eta^{a_x}$ is a normal $L$-subgroup of $\mu$, since for all $g, h \in G$,
	\begin{equation*}
		\begin{split}
			\eta^{a_x}(ghg^{-1}) &= a \wedge \eta(x(ghg^{-1})x^{-1}) \\
			&= a \wedge \eta((xgx^{-1})(xhx^{-1})(xgx^{-1})^{-1}) \\
			&\geq a \wedge \eta(xhx^{-1}) \wedge \mu(xgx^{-1})\\ 
			&\qquad\qquad ~~~~~~~~(\text{since } \eta \in NL(\mu)) \\
			&\geq a \wedge \eta(xhx^{-1}) \wedge \mu(x) \wedge \mu(g) \\
			&~~~~~~\qquad\qquad (\text{since } \mu \in L(G)) \\
			&= a \wedge \eta(xhx^{-1}) \wedge \mu(g) \\ 
			&\qquad\qquad\qquad\qquad~~~ (\text{since } a_x \in \mu) \\
			&= \eta^{a_x}(h) \wedge \mu(g). 
		\end{split}
	\end{equation*}
	Moreover, since $\nu^{c_y} = \nu^{(a \wedge b)_x}$ and $\text{tip}(\eta) = \text{tip}(\nu)$, 
	\[c \geq c \wedge \text{tip}(\eta) = (a \wedge b) \wedge \text{tip}(\eta) = a \wedge \text{tip}(\eta) = \text{tip}(\eta^{a_x}). \]
	Hence by Lemma \ref{nor_conj}, $(\eta^{a_x})^{c_y} = \eta^{a_x}$. Therefore $\eta^{(b \wedge c)_y} = \eta^{a_x}$. Also,
	\[ \nu^{(b \wedge c)_y} = (\nu^{c_y})^{b_e} = (\nu^{(a \wedge b)_x})^{b_e} = \nu^{(a \wedge b)_x}.\]
	Here, it is easy to see that $\nu^{(a \wedge b)_x} = \nu^{a_x}$, since for all $g \in G$,
	\begin{equation*}
		\begin{split}
			\nu^{(a \wedge b)_x}(g) &= (a \wedge b) \wedge \nu(xgx^{-1}) \\
			&= a \wedge (a \wedge \nu(e)) \wedge \nu(xgx^{-1}) \\
			&= a \wedge \nu(xgx^{-1}) \\
			&= \nu^{a_x}(g). 
		\end{split}
	\end{equation*}
	Hence $\nu^{(b \wedge c)_y} = \nu^{a_x}$. Hence by Lemma \ref{conj_prod},
	\begin{equation*}
	\begin{split}
		(\eta \circ \nu)^{(b \wedge c)_y} &= (\eta^{(b \wedge c)_y}) \circ (\nu^{(b \wedge c)_y}) \\ 
		&= \eta^{a_x} \circ \nu^{a_x} \\
		&= (\eta \circ \nu)^{a_x}.
	\end{split}
	\end{equation*}
	Thus we conclude that $\eta \circ \nu$ is a pronormal $L$-subgroup of $\mu$.
\end{proof}

\noindent Next, we show that every maximal $L$-subgroup of an $L$-group $\mu$ is a pronormal $L$-subgroup of $\mu$. For this, we recall the definition of maximal $L$-subgroups from \cite{jahan_max}:

\begin{definition}(\cite{jahan_max})
	Let $\mu \in L(G)$. A proper $L$-subgroup $\eta$ of $\mu$ is said to be a maximal $L$-subgroup of $\mu$ if, whenever $\eta \subseteq \theta \subseteq \mu$ for some $\theta \in L(\mu)$, then either $\theta = \eta$ or $\theta = \mu$.
\end{definition}

\begin{theorem}
	Let $\eta$ be a maximal $L$-subgroup of $\mu$. Then, $\eta$ is a pronormal $L$-subgroup of $\mu$.
\end{theorem}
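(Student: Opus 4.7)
The plan is to reduce to Theorem \ref{nor_prn} by a case analysis driven by the maximality of $\eta$. First I would observe that $\eta \subseteq N(\eta) \subseteq \mu$: the right-hand inclusion is immediate from Definition \ref{defn_norm1}, while the left-hand inclusion uses the fact that $\eta$ is trivially a normal $L$-subgroup of itself, so by Lemma \ref{nor_conj} (applied with the ambient $L$-group taken to be $\eta$) every $L$-point $a_z \in \eta$ satisfies $\eta^{a_z} \subseteq \eta$, and therefore lies in $N(\eta)$ by Theorem \ref{def_norm}. Since $N(\eta)$ is an $L$-subgroup of $\mu$ containing $\eta$, the maximality of $\eta$ forces either $N(\eta) = \eta$ or $N(\eta) = \mu$.

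If $N(\eta) = \mu$, then $\eta$ is a normal $L$-subgroup of $\mu$ (this is the characterization noted in Definition \ref{defn_norm1}), and Theorem \ref{nor_prn} at once yields that $\eta$ is pronormal in $\mu$.

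If instead $N(\eta) = \eta$, I fix an arbitrary $L$-point $a_x \in \mu$ and set $\theta = \langle \eta, \eta^{a_x} \rangle$. Since $\theta$ is an $L$-subgroup of $\mu$ with $\eta \subseteq \theta \subseteq \mu$, maximality again yields $\theta = \eta$ or $\theta = \mu$. In either case the choice $b_y := a_x$ witnesses the pronormality condition for $a_x$: if $\theta = \mu$ then $a_x \in \mu = \theta$ directly, and if $\theta = \eta$ then $\eta^{a_x} \subseteq \theta = \eta$, so Theorem \ref{def_norm} places $a_x \in N(\eta) = \eta \subseteq \theta$. The conjugation identity $\eta^{b_y} = \eta^{a_x}$ is tautological in both sub-cases.

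The only delicate point is verifying that $\eta \subseteq N(\eta)$; once that fact is in hand, the entire argument reduces to a clean two-step case split that invokes maximality twice, first to dichotomize $N(\eta)$ and then (in the second branch) to dichotomize $\langle \eta, \eta^{a_x} \rangle$. I do not anticipate any technical obstruction beyond this preliminary inclusion.
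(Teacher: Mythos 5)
Your proposal is correct and follows essentially the same route as the paper: dichotomize $N(\eta)$ by maximality, dispose of the case $N(\eta)=\mu$ via normality and Theorem \ref{nor_prn}, and in the case $N(\eta)=\eta$ show $a_x \in \langle \eta, \eta^{a_x}\rangle$ by a second maximality dichotomy (the paper cases on whether $\eta^{a_x}\subseteq\eta$ rather than on whether $\langle\eta,\eta^{a_x}\rangle$ equals $\eta$ or $\mu$, which is the same split read in the other direction). Your extra verification that $\eta\subseteq N(\eta)$ is sound, though the paper takes it as given from the cited properties of the normalizer.
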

\begin{proof}
	Let $\eta$ be a maximal $L$-subgroup of $\mu$ and let $N(\eta)$ denote the normalizer of $\eta$ in $\mu$. Then,
	\[ \eta \subseteq N(\eta) \subseteq \mu. \]
	By maximality of $\eta$, either $N(\eta) = \mu$ or $N(\eta) = \eta$. If $N(\eta) = \mu$, then $\eta$ is a normal $L$-subgroup of $\mu$. Hence by Theorem \ref{nor_prn}, $\eta$ is a pronormal $L$-subgroup of $\mu$ and we are done. Now, suppose that $N(\eta) = \eta$. Let $a_x \in \mu$. We show that $a_x \in \langle \eta, \eta^{a_x} \rangle$. We have the following cases:
	\begin{description}
		\item[Case 1: $\mathbf{\eta^{a_x} \nsubseteq \eta.}$]	Then, $\eta \subsetneq \langle \eta, \eta^{a_x} \rangle \subseteq \mu$. By maximality of $\eta$, $\langle \eta, \eta^{a_x} \rangle = \mu$. Thus $a_x \in \langle \eta, \eta^{a_x} \rangle$.
		\item[Case 2: $\mathbf{\eta^{a_x} \subseteq \eta}$.] Then, by Theorem \ref{def_norm}, $a_x \in N(\eta)$. Since $N(\eta) = \eta$, $a_x \in \langle \eta, \eta^{a_x} \rangle$.
	\end{description}
	\noindent Hence in both the cases, $a_x \in \langle \eta, \eta^{a_x} \rangle$. Thus we conclude that $\eta$ is a pronormal $L$-subgroup of $\mu$.
\end{proof}

\noindent In Theorem \ref{subprn}, we present the main result of this paper: an $L$-subgroup of $\mu$ that is both pronormal and subnormal in $\mu$ is a normal $L$-subgroup of $\mu$. This result shows that the concept of pronormality introduced in this study is agreeable with these notions, like their classical counterparts. The notion of subnormal $L$-subgroups was introduced in \cite{ajmal_nc} and studied in detail in \cite{ajmal_subnormal}. Below, we recall the definition of normal closure and subnormality from \cite{ajmal_nc}:

\begin{definition}(\cite{ajmal_nc})
	Let $\eta \in L(\mu)$. The $L$-subset $\mu\eta\mu^{-1}$ of $\mu$ defined by 
	\[ \mu\eta\mu^{-1}(x) = \bigvee_{x=zyz^{-1}} \left\{ \eta(y) \wedge \mu(z) \right\} ~~\text{ for each } x \in G \]
	is called the conjugate of $\eta$ in $\mu$.	The normal closure of $\eta$ in $\mu$, denoted by $\eta^\mu$, is defined to be the $L$-subgroup of $\mu$ generated by the conjugate $\mu\eta\mu^{-1}$, that is,
	\[ \eta^\mu = \langle \mu\eta\mu^{-1} \rangle. \]
	Moreover, $\eta^\mu$ is the smallest normal $L$-subgroup of $\mu$ containing $\eta$.
\end{definition}

\begin{definition}(\cite{ajmal_nc})
	Let $\eta \in L(\mu)$. Define a descending series of $L$-subgroups of $\mu$ inductively as follows:
	\[ \eta_0 = \mu \qquad \text{ and } \qquad \eta_i = \eta^{\eta_{i-1}} \qquad \text{ for all } i \geq 1. \]
	Then, $\eta_i$ is the smallest normal $L$-subgroup of $\eta_{i-1}$ containing $\eta$, called the $i^{th}$ normal closure of $\eta$ in $\mu$. The series of $L$-subgroups
	\[ \mu = \eta_0 \supseteq \eta_1 \supseteq \ldots \supseteq \eta_{i-1} \supseteq \eta_i \supseteq \ldots \]
	is called the normal closure series of $\eta$ in $\mu$. Moreover, if there exists a non-negative integer $m$ such that 
	\[ \eta = \eta_m \vartriangleleft \eta_{m-1} \vartriangleleft \ldots \vartriangleleft \eta_0 = \mu, \]
	then $\eta$ is said to be a subnormal $L$-subgroup of $\mu$ with defect $m$. 
	
	\noindent Clearly, $m=0$ if $\eta = \mu$ and $m=1$ if $\eta \in NL(\mu)$ and $\eta \neq \mu$.
\end{definition}

\noindent Here, we prove the following:

\begin{lemma}
	\label{con_nc}
	Let $\eta \in L(\mu)$ and $a_z$ be an $L$-point of $\mu$. Then, $\eta^{a_z}$ is contained in the normal closure of $\eta$ in $\mu$.
\end{lemma}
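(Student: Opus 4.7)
The plan is to show the stronger statement that $\eta^{a_z}$ is already contained in the conjugate $\mu\eta\mu^{-1}$, from which the lemma will follow at once since $\mu\eta\mu^{-1} \subseteq \langle \mu\eta\mu^{-1}\rangle = \eta^\mu$.

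The key observation is a clever choice of factorization inside the supremum defining $\mu\eta\mu^{-1}$. Given any $x \in G$, I rewrite $x$ as $x = z^{-1}(zxz^{-1})z$, which is a valid factorization of the form $x = wyw^{-1}$ with $w = z^{-1}$ and $y = zxz^{-1}$. Then by the definition of $\mu\eta\mu^{-1}$,
\[ \mu\eta\mu^{-1}(x) \;\geq\; \eta(zxz^{-1}) \wedge \mu(z^{-1}). \]
Since $\mu$ is an $L$-subgroup we have $\mu(z^{-1}) = \mu(z)$, and since $a_z \in \mu$ we have $\mu(z) \geq a$. Therefore
\[ \mu\eta\mu^{-1}(x) \;\geq\; \eta(zxz^{-1}) \wedge a \;=\; \eta^{a_z}(x). \]
This holds for every $x \in G$, so $\eta^{a_z} \subseteq \mu\eta\mu^{-1}$.

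Finally, since $\eta^\mu = \langle \mu\eta\mu^{-1}\rangle$ is by definition an $L$-subgroup of $\mu$ containing $\mu\eta\mu^{-1}$, the inclusion $\eta^{a_z} \subseteq \mu\eta\mu^{-1} \subseteq \eta^\mu$ gives the result. There is no real obstacle here; the only subtlety is spotting the right factorization and invoking $\mu(z^{-1}) = \mu(z) \geq a$ to absorb the scalar $a$ from the definition of $\eta^{a_z}$.
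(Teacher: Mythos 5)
Your proposal is correct and coincides with the paper's own argument: both show $\eta^{a_z} \subseteq \mu\eta\mu^{-1}$ by picking the factorization $x = z^{-1}(zxz^{-1})z$ in the supremum and using $\mu(z^{-1}) = \mu(z) \geq a$, then conclude via $\mu\eta\mu^{-1} \subseteq \langle \mu\eta\mu^{-1}\rangle = \eta^\mu$. No differences worth noting.
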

\begin{proof}
	Let $g \in G$. Then,
	\begin{equation*}
		\begin{split}
			\mu\eta\mu^{-1}(g) &= \bigvee_{g=xyx^{-1}} \left\{ \eta(y) \wedge \mu(x) \right\}\\
			&\geq \eta(zgz^{-1}) \wedge \mu(z^{-1})\\
			&\geq \eta(zgz^{-1}) \wedge a \qquad \qquad (\text{since $a_z \in \mu$})\\
			&= \eta^{a_z}(g).
		\end{split}
	\end{equation*}
	Since $g$ is an arbitrary element of $G$, we conclude that
	\[ \eta^{a_z} \subseteq \mu\eta\mu^{-1} \subseteq \eta^\mu. \]	
\end{proof}

\noindent The following result is immediate from the definition of pronormal $L$-subgroups. We state it here without proof.

\begin{lemma}
	\label{prn_subgp}
	Let $\eta$ and $\nu$ be $L$-subgroups of $\mu$ such that $\eta \subseteq \nu$. If $\eta$ is a pronormal $L$-subgroup of $\mu$, then $\eta$ is a pronormal $L$-subgroup of $\nu$.
\end{lemma}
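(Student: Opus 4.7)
The plan is to observe that the conclusion follows directly from the definition, because the conjugate $\eta^{a_x}$ and the generated $L$-subgroup $\langle \eta, \eta^{a_x} \rangle$ are defined intrinsically in terms of $\eta$ and the $L$-point $a_x$, with no reference to any ambient parent $L$-group. Consequently, the only substantive point is that every $L$-point of $\nu$ is automatically an $L$-point of $\mu$, after which the pronormality of $\eta$ in $\mu$ supplies the required witness essentially for free.

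In detail, I would fix an arbitrary $L$-point $a_x \in \nu$, so that $\nu(x) \geq a$. Since $\nu \subseteq \mu$, it follows that $\mu(x) \geq \nu(x) \geq a$, and hence $a_x \in \mu$. Invoking the hypothesis that $\eta$ is a pronormal $L$-subgroup of $\mu$, I obtain an $L$-point $b_y \in \langle \eta, \eta^{a_x} \rangle$ such that $\eta^{b_y} = \eta^{a_x}$. This same $b_y$ fulfills exactly the conditions demanded by the definition of pronormality of $\eta$ in $\nu$, since those conditions mention only $\eta$, the conjugate $\eta^{a_x}$, and the generated $L$-subgroup $\langle \eta, \eta^{a_x} \rangle$, none of which depend on which intermediate $L$-group sits between $\eta$ and $\mu$. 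Given the purely definitional nature of the argument there is no genuine obstacle; the only thing worth remarking is the trivial inheritance of $L$-points along the inclusion $\nu \subseteq \mu$, which is precisely why the authors chose to state the lemma without proof.
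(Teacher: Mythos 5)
Your argument is correct and is exactly what the paper intends: the lemma is stated there without proof as ``immediate from the definition,'' and your two observations --- that $a_x \in \nu$ implies $a_x \in \mu$ via $\nu \subseteq \mu$, and that $\eta^{a_x}$ and $\langle \eta, \eta^{a_x} \rangle$ depend only on $\eta$ and $a_x$ --- are precisely the content. The only point worth adding for completeness is that $\eta^{a_x} \subseteq \nu$ (so that $\langle \eta, \eta^{a_x}\rangle$ really is an $L$-subgroup of $\nu$), which follows since $\eta^{a_x}(g) = a \wedge \eta(xgx^{-1}) \leq \nu(x) \wedge \nu(xgx^{-1}) \leq \nu(g)$.
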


\begin{theorem}{\label{subprn}}
	Let $\eta \in L(\mu)$. If $\eta$ is both a pronormal and subnormal $L$-subgroup of $\mu$, then $\eta$ is a normal $L$-subgroup of $\mu$.
\end{theorem}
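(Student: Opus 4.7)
The plan is to induct on the subnormal defect $m$ of $\eta$ in $\mu$. If $m=0$ then $\eta=\mu$ and the claim is trivial, while if $m=1$ the definition of subnormality already gives $\eta \triangleleft \eta_0 = \mu$. So fix $m \geq 2$ and assume inductively that every $L$-subgroup of $\mu$ which is both pronormal in $\mu$ and subnormal with defect strictly less than $m$ is normal in $\mu$.

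Consider the normal closure series $\mu = \eta_0 \triangleright \eta_1 \triangleright \cdots \triangleright \eta_{m-1} \triangleright \eta_m = \eta$. I will show $\eta \triangleleft \eta_{m-2}$; once this is established, minimality of the normal closure forces $\eta_{m-1} = \eta^{\eta_{m-2}} = \eta$, so the defect of $\eta$ drops to at most $m-1$ and the inductive hypothesis finishes the job. To prove $\eta \triangleleft \eta_{m-2}$, I will invoke Lemma \ref{nor_conj} and verify $\eta^{a_x} \subseteq \eta$ for every $L$-point $a_x \in \eta_{m-2}$. Since $\eta_{m-1} \triangleleft \eta_{m-2}$, that same lemma yields $\eta_{m-1}^{a_x} \subseteq \eta_{m-1}$, and the inclusion $\eta \subseteq \eta_{m-1}$ promotes pointwise to $\eta^{a_x} \subseteq \eta_{m-1}^{a_x} \subseteq \eta_{m-1}$; hence $\langle \eta, \eta^{a_x} \rangle \subseteq \eta_{m-1}$. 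By Lemma \ref{prn_subgp}, $\eta$ is pronormal in $\eta_{m-2}$, so there exists an $L$-point $b_y \in \langle \eta, \eta^{a_x} \rangle$ with $\eta^{b_y} = \eta^{a_x}$. But then $b_y$ is an $L$-point of $\eta_{m-1}$, and since $\eta \triangleleft \eta_{m-1}$, Lemma \ref{nor_conj} gives $\eta^{b_y} \subseteq \eta$, whence $\eta^{a_x} \subseteq \eta$ as required.

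The main obstacle is recognizing where the conjugator produced by pronormality actually lives. Pronormality of $\eta$ in $\mu$ by itself only locates $b_y$ inside $\langle \eta, \eta^{a_x} \rangle \subseteq \mu$, which is far too weak to interact with the partial normality we have. The crucial observation is that, because $a_x$ is drawn from $\eta_{m-2}$ and $\eta_{m-1}$ is normal in $\eta_{m-2}$, the generated $L$-subgroup $\langle \eta, \eta^{a_x} \rangle$ is trapped inside $\eta_{m-1}$, which is exactly the rung of the chain at which $\eta$ is already known to be normal. Once this trapping is spotted, Lemmas \ref{nor_conj} and \ref{prn_subgp} combine cleanly and the induction runs without further technicalities.
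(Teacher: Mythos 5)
Your proof is correct and follows the same overall strategy as the paper: induction on the defect $m$, with the inductive step reducing the defect by showing $\eta\lhd\eta_{m-2}$ and then invoking minimality of the normal closure to conclude $\eta_{m-1}=\eta$. The one substantive difference is how you trap $\langle\eta,\eta^{a_x}\rangle$ inside $\eta_{m-1}$: the paper proves a separate lemma (Lemma \ref{con_nc}) showing that every conjugate $\eta^{a_z}$ for $a_z\in\mu$ is contained in the normal closure $\eta^{\mu}$, and then runs the explicit computation only in the base case $m=2$, deferring to that base case inside $\eta_{m-2}$ for general $m$. You instead obtain the containment from monotonicity of conjugation together with Lemma \ref{nor_conj} applied to $\eta_{m-1}\lhd\eta_{m-2}$ (namely $\eta^{a_x}\subseteq\eta_{m-1}^{a_x}\subseteq\eta_{m-1}$), which lets you dispense with Lemma \ref{con_nc} entirely and handle all $m\geq 2$ uniformly in the inductive step. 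Both routes are valid; yours is marginally more economical in that it needs one fewer auxiliary lemma, while the paper's version isolates the defect-two computation as a reusable base case.
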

\begin{proof}
	Let $\eta$ be a pronormal and subnormal $L$-subgroup of $\mu$ with defect $m \geq 2$. We show that $\eta$ is normal in $\mu$ by applying induction on $m$. 
	
	\noindent Suppose that $\eta$ is subnormal in $\mu$ with defect $2$ and let
	\[ \eta = \eta_2 \lhd \eta_1=\eta^{\mu} \lhd \eta_0 = \mu \]
	be the normal closure series of $\eta$. To show that $\eta$ is normal in $\mu$, let $x,g \in G$. Let $a = \mu(g)$. Then, $a_{g^{-1}} \in \mu$. By Lemma \ref{con_nc}, $\eta^{a_{g^{-1}}} \subseteq \eta^\mu = \eta_{1}$. Now, since $\eta$ is a pronormal $L$-subgroup of $\mu$, there exists $b_w \in \langle \eta, \eta^{a_{g^{-1}}} \rangle \subseteq \eta_1$ such that 
	\[ \eta^{b_w} = \eta^{a_{g^{-1}}}. \] 
	Since $\eta$ is normal in $\eta_1$ and $b_w \in \eta_1$, by Lemma \ref{nor_conj}, $\eta^{b_w} \subseteq \eta$. Hence $\eta^{a_{g^{-1}}} \subseteq \eta$. Therefore
	\[ \eta^{a_{g^{-1}}}(gxg^{-1}) \leq \eta(gxg^{-1}), \]
	that is,
	\[ a \wedge \eta(x) \leq \eta(gxg^{-1}). \]
	Since $a = \mu(g)$,
	\[ \eta(gxg^{-1}) \geq \eta(x) \wedge \mu(g). \]
	Therefore $\eta$ is a normal $L$-subgroup of $\mu$. Hence the result is true for $m=2$.
	
	\noindent Next, suppose that the result holds for $m-1$, that is, if $\eta$ is a pronormal and subnormal $L$-subgroup of subnormal with defect $m-1$, then $\eta$ is a normal $L$-subgroup of $\mu$.
	
	\noindent Suppose that $\eta$ is a pronormal and subnormal $L$-subgroup of $\mu$ with defect $m$. Let
	\[ \eta = \eta_m \lhd \eta_{m-1} \lhd \eta_{m-2} \lhd \ldots \lhd \eta_1 \lhd \eta_0 = \mu \]
	be the normal closure series of $\eta$. Then, by lemma \ref{prn_subgp}, $\eta$ is a pronormal $L$-subgroup of $\eta_{m-2}$. Also, $\eta$ is a subnormal $L$-subgroup of $\eta_{m-2}$ with defect $2$. Therefore $\eta$ is normal in $\eta_{m-2}$. By the definition of normal closure, $\eta_{m-1}$ is the smallest normal $L$-subgroup of $\eta_{m-2}$ containing $\eta_m = \eta$. Since $\eta$ is a normal $L$-subgroup of $\eta_{m-2}$, we must have $\eta_{m-1} = \eta$. Thus
	\[ \eta = \eta_{m-1} \lhd \eta_{m-2} \lhd \ldots \lhd \eta_1 \lhd \eta_0 = \mu \]
	is the normal closure series for $\eta$. Hence $\eta$ is a subnormal $L$-subgroup of $\mu$ with defect $m-1$ and by the induction hypothesis, $\eta$ is normal in $\mu$.
\end{proof}

\noindent One of the significant applications of Theorem \ref{subprn} is in the case of nilpotent $L$-subgroups. In \cite{jahan_app}, the authors have studied the ascending chain of normalizers  and normal closure series of $L$-subgroups of nilpotent $L$-subgroups in detail. The results presented in \cite{jahan_app}, along with Theorem \ref{subprn}, can be utilized to show that in nilpotent $L$-groups, for $L$-subgroups having the same tip and tail as the parent $L$-group, the notions of normal and pronormal $L$-subgroups coincide (Theorem \ref{nil_norprn}).  

The notion of a nilpotent $L$-subgroup was developed by Ajmal and Jahan \cite{ajmal_nil}. For this, the definition of the commutator of two $L$-subgroups was modified, and this modified definition was used to develop the notion of the descending central chain of an $L$-subgroup. We recall these concepts below.

\begin{definition}(\cite{ajmal_nil})
	Let $\eta$, $\theta \in L^{\mu}$. The commutator of $\eta$ and $\theta$ is the $L$-subset $(\eta, \theta)$ of $\mu$ defined as follows:
	\[ (\eta, \theta)(x) = 
	\begin{cases}
	\begin{split}
		\vee \{ \eta(y) &\wedge \theta(z)\} \\& \text{if }x=[y,z] \text{ for some } y,z \in G, \\ 
		\text{inf } \eta &\wedge \text{inf } \theta \\& \text{if } x \neq [y,z] \text{ for any } y,z \in G.
	\end{split}	
	\end{cases} \]
\end{definition}

\noindent The commutator $L$-subgroup of $\eta$, $\theta \in L^{\mu}$, denoted by $[\eta, \theta]$, is defined to be the $L$-subgroup of $\mu$ generated by $(\eta, \theta)$.

\begin{definition}(\cite{ajmal_nil})
	\label{def_centchain}
	Let $\eta \in L(\mu)$. Take $Z_0(\eta) = \eta$ and for each $i \geq 0$, define $Z_{i+1}(\eta) = [Z_i(\eta), \eta]$. Then, the chain
	\[ \eta = Z_0(\eta) \supseteq Z_1(\eta) \supseteq \ldots \supseteq Z_i(\eta) \supseteq \ldots \]
	of $L$-subgroups of $\mu$ is called the descending central chain of $\eta$. 
\end{definition}

\begin{definition}(\cite{ajmal_nil})
	\label{def_nil}
	Let $\eta \in L(\mu)$ with tip $a_0$ and tail $t_0$ and $a_0 \neq t_0$. If the descending central chain
	\[ \eta = Z_0(\eta) \supseteq Z_1(\eta) \supseteq \ldots \supseteq Z_i(\eta) \supseteq \ldots \]
	terminates to the trivial $L$-subgroup $\eta_{t_0}^{a_0}$ in a finite number of steps, then $\eta$ is called a nilpotent $L$-subgroup of $\mu$. Moreover, $\eta$ is said to be nilpotent of class $c$ if $c$ is the smallest non-negative integer such that $Z_c(\eta) = \eta_{t_0}^{a_0}$. 
\end{definition}

\noindent Here, we define the successive normalizers of $\eta$ as follows:
\[	\eta_0 = \eta \qquad \text{ and } \qquad \eta_{i+1} = N(\eta_i) \qquad \text{ for all } i \geq 0.	\]
Then, by the definition of normalizer (see Definition \ref{defn_norm1}),  $\eta_{i+1}$ is the the largest $L$-subgroup of $\mu$ containing $\eta_i$ such that  $\eta_i \lhd \eta_{i+1}$. Consequently,
\begin{equation}
	\label{chain1}
	\eta = \eta_0 \subseteq \eta_1 \subseteq \ldots \subseteq \eta_i \subseteq \eta_{i+1} \subseteq \ldots 
\end{equation}
is an ascending chain of $L$-subgroups of $\mu$ starting from $\eta$ such that each $\eta_i$ is a normal $L$-subgroup of $\eta_{i+1}$. We call (\ref{chain1}) the ascending chain of normalizers of  $\eta$ in $\mu$.

\begin{lemma}(\cite{jahan_app})
	\label{subnormal}
	Let $\mu \in L(G)$  be a nilpotent $L$-group and $\eta$ be an $L$-subgroup of $\mu$ having the same tip and tail as $\mu$. Then, the ascending chain of normalizers of  $\eta$ in $\mu$ is finite and terminates at $\mu$.
\end{lemma}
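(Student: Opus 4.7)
The plan is to establish the inductive containment
\[ Z_{c-i}(\mu) \subseteq \eta_i \qquad \text{for all } 0 \leq i \leq c, \]
where $c$ is the nilpotency class of $\mu$ and $\mu = Z_0(\mu) \supseteq Z_1(\mu) \supseteq \cdots \supseteq Z_c(\mu) = \mu_{t_0}^{a_0}$ is the descending central chain from Definition \ref{def_centchain}. Taking $i = c$ immediately yields $\mu = Z_0(\mu) \subseteq \eta_c \subseteq \mu$, so $\eta_c = \mu$ and the chain terminates in at most $c$ steps; since the nilpotency class is finite by hypothesis, this gives the desired finiteness.

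For the base case $i = 0$, I will use that $\eta$ shares its tip $a_0$ and tail $t_0$ with $\mu$. Since $\eta(e) = a_0$ and $\eta(x) \geq t_0$ for every $x \in G$, a direct pointwise comparison with the trivial $L$-subgroup shows $Z_c(\mu) = \mu_{t_0}^{a_0} \subseteq \eta = \eta_0$. For the inductive step, assume $Z_{c-i+1}(\mu) \subseteq \eta_{i-1}$; I need $Z_{c-i}(\mu) \subseteq N(\eta_{i-1}) = \eta_i$. By Theorem \ref{def_norm}, this reduces to showing $(\eta_{i-1})^{b_z} \subseteq \eta_{i-1}$ for every $L$-point $b_z \in Z_{c-i}(\mu)$, since any $L$-subgroup equals the union of its $L$-points. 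The group identity $g = [z,g]^{-1}(z g z^{-1})$ combined with the fact that $\eta_{i-1}$ is an $L$-subgroup gives
\[ \eta_{i-1}(g) \geq \eta_{i-1}([z,g]) \wedge \eta_{i-1}(zgz^{-1}). \]
Because $Z_{c-i+1}(\mu) = [Z_{c-i}(\mu), \mu]$ and the commutator $L$-subset satisfies $(\xi,\theta)([y,w]) \geq \xi(y) \wedge \theta(w)$, I obtain $Z_{c-i+1}(\mu)([z,g]) \geq b \wedge \mu(g)$. The induction hypothesis and $\eta_{i-1} \subseteq \mu$ then yield $\eta_{i-1}([z,g]) \geq b \wedge \eta_{i-1}(zgz^{-1})$, and feeding this back in produces $\eta_{i-1}(g) \geq b \wedge \eta_{i-1}(zgz^{-1}) = (\eta_{i-1})^{b_z}(g)$, as required.

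The main obstacle I anticipate is the commutator step in the inductive argument: one must pass from the pointwise bound on the commutator $L$-subset $(\cdot,\cdot)$ to its generated $L$-subgroup $[\cdot,\cdot] = Z_{c-i+1}(\mu)$, and ensure that the specific factorization $x = [z,g]$ is enough to invoke the supremum branch of the piecewise definition of $(\xi,\theta)$ rather than the infimum fallback. Once this algebraic transfer is handled cleanly, the inductive descent through the central chain propagates without further obstruction, and the role of the same-tip-same-tail hypothesis is precisely to anchor the induction at $i = 0$.
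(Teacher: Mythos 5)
The paper states this lemma without proof, importing it from \cite{jahan_app}, so there is no in-paper argument to compare against; your proof is the natural $L$-group adaptation of the classical fact that nilpotent groups satisfy the normalizer condition, run by induction along the descending central chain, and it is correct in substance (and almost certainly the same route as the cited source and as \cite{jahan_nil}). Two small points you leave implicit. First, after obtaining $\eta_{i-1}(g)\ \ge\ b\wedge\mu(g)\wedge\eta_{i-1}(zgz^{-1})$ you still must absorb the $\mu(g)$ term to reach $(\eta_{i-1})^{b_z}(g)=b\wedge\eta_{i-1}(zgz^{-1})$; this needs $\mu(g)\ \ge\ \mu(z)\wedge\mu(zgz^{-1})\ \ge\ b\wedge\eta_{i-1}(zgz^{-1})$, which follows from $g=z^{-1}(zgz^{-1})z$, from $b\le\mu(z)$, and from $\eta_{i-1}\subseteq\mu$ — worth writing out, since it is exactly where the hypothesis $b_z\in\mu$ enters. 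Second, the identity $g=[z,g]^{-1}(zgz^{-1})$ depends on the commutator convention of \cite{ajmal_nil}; with the other convention one uses $[z,g]^{-1}=[g,z]$ together with the inverse-invariance of the $L$-subgroup $[Z_{c-i}(\mu),\mu]$, so nothing breaks. The step you flag as the main obstacle — passing from the commutator $L$-subset to the generated $L$-subgroup — is actually the easy direction, since $\langle(\xi,\theta)\rangle\supseteq(\xi,\theta)$ pointwise and $[z,g]$ genuinely lies in the supremum branch of the piecewise definition.
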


\begin{lemma}(\cite{jahan_app})
	\label{chain_nc}
	Let $\mu \in L(G)$ and $\eta \in L(\mu)$. Then, there exists an ascending chain of $L$-subgroups 
	\[ \eta = \theta_0 \subseteq \theta_1 \subseteq \ldots \subseteq \theta_n \subseteq \ldots \subseteq  \mu \]
	terminating at $\mu $ in a  finite number of steps such that each $\theta_i$ is normal in $\theta_{i+1}$ if and only if the normal closure series of $\eta$ in $\mu$ terminates at $\eta$ in a  finite number of steps.
\end{lemma}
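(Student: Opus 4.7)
The plan is to recognize Lemma \ref{chain_nc} as the familiar characterization of subnormality from classical group theory lifted to the $L$-setting: terminating normal closure series $\equiv$ existence of \emph{some} finite ascending normal chain from $\eta$ to $\mu$. The forward direction is essentially trivial, while the reverse direction is where the real content sits. I would handle them separately.

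For the easy direction, suppose the normal closure series
\[ \eta = \eta_m \lhd \eta_{m-1} \lhd \ldots \lhd \eta_0 = \mu \]
terminates at $\eta$ after $m$ steps. Setting $\theta_i := \eta_{m-i}$ for $0 \le i \le m$ immediately gives the required ascending chain $\eta = \theta_0 \subseteq \theta_1 \subseteq \cdots \subseteq \theta_m = \mu$ with $\theta_i \lhd \theta_{i+1}$ for each $i$.

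For the nontrivial direction, suppose we are given an ascending chain $\eta = \theta_0 \subseteq \theta_1 \subseteq \cdots \subseteq \theta_n = \mu$ with each $\theta_i \lhd \theta_{i+1}$. I would prove by downward induction on $i$ that $\eta_i \subseteq \theta_{n-i}$ (where $\eta_i$ is the $i$-th term of the normal closure series defined in the excerpt). The base case $\eta_0 = \mu = \theta_n$ is immediate. For the inductive step, assuming $\eta_i \subseteq \theta_{n-i}$, consider the $L$-subgroup $\theta_{n-i-1} \cap \eta_i$ of $\eta_i$. Since $\eta \subseteq \theta_{n-i-1}$ and $\eta \subseteq \eta_i$, this intersection contains $\eta$. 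The key verification is that $\theta_{n-i-1} \cap \eta_i$ is a normal $L$-subgroup of $\eta_i$: this uses $\theta_{n-i-1} \lhd \theta_{n-i}$ together with the containment $\eta_i \subseteq \theta_{n-i}$ to show, for $g, y \in G$,
\[ (\theta_{n-i-1} \cap \eta_i)(gyg^{-1}) \geq (\theta_{n-i-1} \cap \eta_i)(y) \wedge \eta_i(g), \]
using the normality inequality for $\theta_{n-i-1}$ in $\theta_{n-i}$ and the $L$-subgroup inequality for $\eta_i$. Since $\eta_{i+1} = \eta^{\eta_i}$ is by construction the \emph{smallest} normal $L$-subgroup of $\eta_i$ containing $\eta$, we conclude $\eta_{i+1} \subseteq \theta_{n-i-1} \cap \eta_i \subseteq \theta_{n-i-1}$, completing the induction. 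Taking $i = n$ gives $\eta_n \subseteq \theta_0 = \eta$, and since always $\eta \subseteq \eta_n$, we conclude $\eta_n = \eta$, so the normal closure series terminates at $\eta$ in at most $n$ steps.

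The main obstacle is the short normality verification for the intersection $\theta_{n-i-1} \cap \eta_i$ inside $\eta_i$; it is the $L$-analogue of the classical fact that $A \lhd B$ and $C \leq B$ imply $A \cap C \lhd C$, and only requires combining the defining inequalities in one line, so it is more of a bookkeeping point than a genuine difficulty.
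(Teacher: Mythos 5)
Your proof is correct. Note first that the paper itself gives no proof of Lemma \ref{chain_nc}: it is imported verbatim from the reference \cite{jahan_app} (listed as submitted for review), so there is no in-paper argument to compare against. Judged on its own, your argument is the standard classical one transported faithfully to the $L$-setting, and every step goes through. The forward direction is indeed just a reindexing of the normal closure series. For the reverse direction, the downward induction $\eta_i \subseteq \theta_{n-i}$ is the right device, and the one computation that needs checking does work: writing $\nu = \theta_{n-i-1}\cap\eta_i$, one has
\[
\nu(gyg^{-1}) = \theta_{n-i-1}(gyg^{-1})\wedge\eta_i(gyg^{-1}) \geq \bigl(\theta_{n-i-1}(y)\wedge\theta_{n-i}(g)\bigr)\wedge\bigl(\eta_i(y)\wedge\eta_i(g)\bigr) \geq \nu(y)\wedge\eta_i(g),
\]
using $\theta_{n-i-1}\lhd\theta_{n-i}$, the inductive hypothesis $\eta_i\subseteq\theta_{n-i}$, and the $L$-subgroup inequality for $\eta_i$; this matches the paper's definition of normality of $\nu$ in $\eta_i$. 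The appeal to $\eta_{i+1}=\eta^{\eta_i}$ being the \emph{smallest} normal $L$-subgroup of $\eta_i$ containing $\eta$ is licensed by the definition of normal closure recalled in the paper, and the containment $\eta\subseteq\theta_{n-i-1}\cap\eta_i$ is immediate. Taking $i=n$ closes the argument. No gaps.
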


\noindent In view of Lemmas \ref{subnormal} and \ref{chain_nc}, we have the following:

\begin{theorem}\label{nil_subnormal}
	Let $\mu \in L(G)$ be a nilpotent $L$-group and $\eta$ be an $L$-subgroup of $\mu$ having the same tip and tail as $\mu$. Then, $\eta$ is a subnormal $L$-subgroup of $\mu$.
\end{theorem}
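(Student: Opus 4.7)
The plan is to chain together the two lemmas immediately preceding the theorem. By hypothesis, $\mu$ is nilpotent and $\eta$ shares the same tip and tail as $\mu$, so Lemma \ref{subnormal} applies: the ascending chain of normalizers
\[ \eta = \eta_0 \subseteq \eta_1 \subseteq \ldots \subseteq \eta_n = \mu \]
terminates at $\mu$ in finitely many steps. By the construction of this chain (each $\eta_{i+1} = N(\eta_i)$), each $\eta_i$ is a normal $L$-subgroup of $\eta_{i+1}$.

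Next, I would invoke Lemma \ref{chain_nc} with the $\theta_i$'s taken to be the $\eta_i$'s above. The existence of such a finite ascending chain from $\eta$ to $\mu$ with each term normal in the next is exactly the hypothesis of the lemma, so its conclusion gives that the normal closure series of $\eta$ in $\mu$ terminates at $\eta$ in a finite number of steps.

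Finally, I would unpack what this conclusion means: if the normal closure series $\mu = \eta^0 \supseteq \eta^1 \supseteq \ldots$ (in the notation of the normal closure construction) reaches $\eta$ after finitely many terms, say at stage $m$, then we obtain the finite chain
\[ \eta = \eta^m \lhd \eta^{m-1} \lhd \ldots \lhd \eta^0 = \mu, \]
which is precisely the definition of $\eta$ being a subnormal $L$-subgroup of $\mu$ (with defect at most $m$).

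There is essentially no obstacle here: the theorem is a direct composition of Lemmas \ref{subnormal} and \ref{chain_nc} together with the definition of subnormality. The only small point to be careful about is to verify that the ascending chain of normalizers supplied by Lemma \ref{subnormal} genuinely satisfies the hypothesis of Lemma \ref{chain_nc} (namely, that each member is normal in its successor), which follows from the defining property of the normalizer recalled just before Lemma \ref{subnormal}.
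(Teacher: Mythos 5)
Your proof is correct and follows exactly the route the paper intends: the paper derives this theorem directly "in view of Lemmas \ref{subnormal} and \ref{chain_nc}," i.e., by chaining the termination of the ascending chain of normalizers with the equivalence to the normal closure series terminating at $\eta$, and then invoking the definition of subnormality. Your careful check that each $\eta_i$ is normal in $\eta_{i+1}$ (so that Lemma \ref{chain_nc} applies) is the right point to verify and matches the paper's setup.
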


\noindent Thus Theorems \ref{nor_prn}, \ref{subprn} and \ref{nil_subnormal} together give the following result:

\begin{corollary}\label{nil_norprn}
	Let $\mu \in L(G)$ be a nilpotent $L$-group and $\eta$ be an $L$-subgroup of $\mu$ having the same tip and tail as $\mu$. Then, $\eta$ is a normal $L$-subgroup of $\mu$ if and only if $\eta$ is a pronormal $L$-subgroup of $\mu$.
\end{corollary}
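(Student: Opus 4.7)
The plan is to prove both directions of the biconditional by directly assembling the three theorems cited just before the corollary: Theorem~\ref{nor_prn}, Theorem~\ref{subprn}, and Theorem~\ref{nil_subnormal}. No new computation is needed; the content is entirely in the earlier machinery.

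For the forward direction, I would assume $\eta \in NL(\mu)$ and invoke Theorem~\ref{nor_prn} verbatim: every normal $L$-subgroup of $\mu$ is pronormal in $\mu$. This direction does not use the nilpotency hypothesis nor the tip/tail condition.

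For the converse, I would assume $\eta$ is pronormal in $\mu$ and aim to conclude $\eta \triangleleft \mu$. The key observation is that the hypotheses on $\mu$ (nilpotent) and $\eta$ (same tip and tail as $\mu$) are precisely those required by Theorem~\ref{nil_subnormal}, which yields that $\eta$ is a subnormal $L$-subgroup of $\mu$. Having both pronormality and subnormality in hand, I would then apply Theorem~\ref{subprn} to conclude that $\eta \in NL(\mu)$, which completes the proof.

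The main obstacle is essentially nonexistent at this stage, since all the substantive work has been done upstream: the compatibility of pronormality with subnormality is Theorem~\ref{subprn}, and the subnormality of tip/tail-preserving $L$-subgroups of a nilpotent $L$-group is Theorem~\ref{nil_subnormal} (proved via Lemmas~\ref{subnormal} and~\ref{chain_nc}). The only care needed is to check that the hypotheses on $\eta$ (namely $\mathrm{tip}(\eta) = \mathrm{tip}(\mu)$ and $\mathrm{tail}(\eta) = \mathrm{tail}(\mu)$) are indeed what Theorem~\ref{nil_subnormal} requires, which is the case by inspection. Hence the corollary reduces to a two-line argument citing the three theorems in sequence.
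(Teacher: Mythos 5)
Your proposal is correct and matches the paper exactly: the paper presents this corollary as an immediate consequence of Theorems \ref{nor_prn}, \ref{subprn} and \ref{nil_subnormal}, assembled in precisely the way you describe (normality gives pronormality unconditionally; conversely nilpotency plus the tip/tail hypothesis gives subnormality, and subnormality plus pronormality gives normality). No discrepancy to report.
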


\section{Conclusion}
In classical group theory, the pronormal subgroups play an indispensable role in the studies of normality and subnormality. While the notion of the normal $L$-subgroups and subnormal $L$-subgroups were efficiently introduced in \cite{wu_normal} and \cite{ajmal_subnormal}, respectively, the concept of pronormal $L$-subgroup that was compatible with these notions was absent. We have, in this paper, succeeded in providing such a notion of pronormality. Moreover, the notion of the pronormal $L$-subgroup developed in this study can be applied in the studies of the concepts of abnormal and contranormal $L$-subgroups, etc. These notions are closely related to the concept of normality. However, a proper research on these topics is lacking due to the absence of a  comprehensive recent study of the pronormal $L$-subgroups. The pronormality developed in this paper has removed this limitation and opens the door to research on these topics. 

The research in the discipline of fuzzy group theory came to a halt after Tom Head’s metatheorem and subdirect product theorems. This is because most of the concepts and results in the studies of fuzzy algebra could be established through simple applications of the metatheorem and the subdirect product theorem. However, the metatheorem and the subdirect product theorems are not applicable in the $L$-setting. Hence we suggest the researchers pursuing studies in these areas to investigate the properties of $L$-subalgebras of an $L$-algebra rather than $L$-subalgebras of classical algebra. 

\section*{Acknowledgements}
\noindent The second author of this paper was supported by the Senior Research Fellowship jointly funded by CSIR and UGC, India during the course of development of this paper.

\end{document}